\documentclass[11pt]{amsart}
\usepackage{amsaddr}
\usepackage{amsmath, amssymb, amsthm, bm, booktabs}
\usepackage{geometry}
\usepackage{enumitem}
\usepackage{tikz}
\usetikzlibrary{shapes.geometric, positioning}
\usepackage{setspace}
\usepackage{array}
\usepackage{hyperref}
\usepackage{parskip}
\newtheorem{theorem}{Theorem}
\newtheorem{proposition}{Proposition}
\newtheorem{lemma}{Lemma}
\newtheorem{definition}{Definition}
\newtheorem{corollary}{Corollary}
\newtheorem{remark}{Remark}
\title[Complexity of Recognizing SDP Exactness for Max-Cut]{The Complexity of Recognizing SDP Exactness for the Maximum Cut Problem}
\author{Avinash Bhardwaj}
\email{abhardwaj@iitb.ac.in}
\address{Department of Industrial Engineering and Operations Research,\\ Indian Institute of Technology Bombay,\\ Mumbai, India 400076}
\date{September 2026}

\begin{document}
\begin{abstract}
The standard semidefinite programming (SDP) relaxation of Max-Cut is exact when its optimum equals the maximum cut value. Delorme and Poljak resolved NP-completeness of recognizing exactness for weighted graphs and left the unweighted case open. We show that recognition is NP-complete even for connected simple unweighted graphs, and hence strongly NP-complete for nonnegative integer edge weights.

The reduction provides an explicit SDP optimum and makes the additive integrality gap equal to the minimum number of unsatisfied clauses in the source formula. Recognition remains NP-complete even when an exact rational optimal primal--dual pair is supplied.

We also establish strong NP-hardness of recognizing exactness of the Frieze--Jerrum Max-$k$-Cut relaxation for every fixed $k\ge3$, even for connected graphs with nonnegative integer edge weights. An independent bounded-weight construction gives a second proof for Max-Cut. Finally, reductions preserving the additive gap up to explicit factors establish strong NP-completeness of exactness recognition for a basic Max-DiCut SDP and NP-hardness for a Max-Bisection SDP.
\end{abstract}
\subjclass[2020]{Primary 90C22, 68Q17; Secondary 90C27, 90C60}
\maketitle
\section{Introduction}
\label{sec:introduction}
The Maximum Cut (Max-Cut) problem is a fundamental combinatorial optimization problem that seeks to partition the vertices of an undirected, edge-weighted graph into two disjoint sets such that the total weight of the edges crossing the partition is maximized. Let $G=(V,E)$ be an undirected graph on $n$ vertices with nonnegative edge-weight matrix $W\in\mathbb R^{n\times n}$. The matrix $W$ is symmetric with zero diagonal, and $W_{ij}=0$ whenever $\{i,j\}\notin E$. Computational statements throughout the manuscript use integer or rational weights represented in binary. The integer formulation of
Max-Cut is~\cite{GW95}:
\begin{equation}
    \operatorname{MC}(G) = \max_{x \in \{-1, 1\}^n} \frac{1}{4} \sum_{i \neq j} W_{ij} (1 - x_i x_j).\label{eq:maxcut-intro}
\end{equation}
Since Max-Cut is strongly NP-hard~\cite{karp1972reducibility}, extensive research has focused on its continuous relaxations. The foundational work of Goemans and Williamson~\cite{GW95} introduced a Semidefinite Programming (SDP) relaxation that lifts the scalar variables $x_i$ to unit vectors $v_i \in \mathbb{R}^n$:
\begin{equation}
    \operatorname{SDP}(G) = \max_{X \succeq 0, X_{ii}=1} \frac{1}{4} \sum_{i \neq j} W_{ij} (1 - X_{ij}),\label{eq:sdp-intro}
\end{equation}
where $X$ is the Gram matrix of the unit vectors, with entries $X_{ij}=\langle v_i,v_j\rangle$. Combined with randomized hyperplane rounding, this relaxation yields an approximation algorithm with ratio $\alpha_{GW}\approx 0.878$.

The relaxation is \emph{exact} if $\operatorname{SDP}(G)=\operatorname{MC}(G)$, equivalently, if it has a rank-one optimal solution. This requires the existence of such a solution; other optimal solutions may have higher rank. In this manuscript, we study the complexity of recognizing exactness from the input graph $G$.

\subsection{Previous work}
The exactness of continuous relaxations for Max-Cut is deeply rooted in the spectral bounds established by Delorme and Poljak in their seminal series of papers~\cite{DP93_II,DP93_I}. Specifically, given a graph $G$, Delorme and Poljak~\cite{DP93_II,DP93_I} studied the eigenvalue bound
\[
\phi(G)=\min_{u\in\mathbb R^n,\ \sum_i u_i=0}\frac n4\lambda_{\max}\bigl(L+\operatorname{Diag}(u)\bigr),
\]
where $L$ is the weighted Laplacian of $G$. This bound equals the optimum of~\eqref{eq:sdp-intro},as explained through semidefinite duality in~\cite{PR95}. Delorme and Poljak proved NP-completeness of recognizing exactness for weighted graphs~\cite[Corollary~3.3]{DP93_II}. However, their reduction from Exact Sum does not establish hardness for polynomially bounded weights. They explicitly left open the recognition problem for simple unweighted graphs~\cite[p.~324]{DP93_II}.

Laurent and Poljak~\cite{laurent1996positive} give an algebraic description that helps explain the connection between exactness and satisfiability. They study minimization of objectives of the form $\langle bb^\top,X\rangle$ over the elliptope, the feasible region of \eqref{eq:sdp-intro}. On a cut matrix $X=xx^\top$, this objective equals $(b^\top x)^2$. When the SDP minimum is zero, exactness therefore amounts to finding signs $x_i\in\{-1,1\}$ such that $\sum_i b_i x_i=0$. This connection underlies their hardness result for recognizing attainment by a cut matrix. 

Our constructions use a sum of such squares, one for each clause. Auxiliary vectors make every squared term vanish in the SDP, whereas a sign assignment makes them all vanish if and only if the source formula is satisfiable. In the unweighted construction, these clause constraints are
implemented by edge-disjoint copies of $K_6$.

A complementary line of exploration identifies graph families and operations for which exactness holds. Hong, Lee, and Wei~\cite{hong2021} study ranks of optimal solutions and give a signed-Laplacian certificate. Mirka and Williamson~\cite{MW2X} investigate uniqueness and semidefinite rank. Bhardwaj, Gogoi, Naryanan and Pathapati~\cite{GBNP25} establish further exact families and exactness-preserving operations, and give examples distinguishing uniqueness of the maximum cut from uniqueness of the SDP optimum. 

These structural results also provide context for the recognition problem. As recalled in Section~\ref{sec:np_completeness}, a given cut determines a slack matrix whose positive semidefiniteness certifies that the cut is SDP-optimal. Consequently, on any graph class where a maximum cut can be computed in polynomial time, exactness can also be decided in polynomial time. This applies, for example, to planar graphs~\cite{OD73} and graphs of fixed bounded treewidth~\cite{arnborg1989linear}.

\subsection{Contributions}
Our main result answers the unweighted recognition question posed by Delorme and Poljak.

\begin{theorem}\label{thm:main}
Deciding whether the standard Max-Cut SDP relaxation is exact is NP-complete even for connected simple unweighted graphs. Consequently, the problem is strongly NP-complete for graphs with nonnegative integer edge weights.
\end{theorem}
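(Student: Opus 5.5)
The plan has two parts: membership in NP, and a polynomial reduction from a satisfiability problem to connected simple unweighted graphs. Strong NP-completeness for nonnegative integer weights then follows at once, because the unweighted instances are $0/1$-weighted and so have polynomially bounded weights. For membership, the certificate is a cut $x\in\{-1,1\}^n$. Given $x$, form the dual slack matrix $S=\operatorname{Diag}(d)-\tfrac14 W$ with $d_i=\tfrac14\sum_j W_{ij}x_ix_j$, so that $\langle S,xx^\top\rangle=0$ and the dual value $\sum_i d_i$ equals the cut value of $x$. Positive semidefiniteness of the rational matrix $S$ can be checked in polynomial time by exact $LDL^\top$ (Gaussian elimination). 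If $S\succeq 0$, weak duality gives $\operatorname{SDP}(G)\le\operatorname{MC}(G)$, so the relaxation is exact. Conversely, if the relaxation is exact, then a maximum cut $x$ is an SDP optimum, and complementary slackness (strong duality holds since the elliptope has the interior point $I$) yields $S\succeq 0$. This is the slack-matrix fact alluded to in the introduction.

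For hardness I would exploit the clique identity. On a clique $K_S$ the cut value of $x$ is $\tfrac14\bigl(|S|^2-(\sum_{i\in S}x_i)^2\bigr)$, and the SDP analogue is $\tfrac14\bigl(|S|^2-\|\sum_{i\in S}v_i\|^2\bigr)$. Hence, if $G$ is an edge-disjoint union of $m$ copies of $K_6$ (vertices may be shared, edges may not), then
\[
\operatorname{MC}(G)=9m-\tfrac14\min_x\sum_g s_g(x)^2, \qquad \operatorname{SDP}(G)\le 9m,
\]
where $s_g(x)$ is the sum of the six signs in gadget $g$. Moreover, $\operatorname{SDP}(G)=9m$ as soon as unit vectors exist whose sum vanishes in every gadget. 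Exactness then holds iff some sign vector balances every $K_6$ three-against-three. Since each $s_g$ is even, the additive gap is exactly the number of gadgets that cannot be balanced, which matches the abstract's claim.

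The reduction would start from a variant such as positive 1-in-3 SAT, or NAE-3SAT, with each variable occurring a bounded number of times. Each variable becomes a vertex, together with a global reference vertex $t$ and private auxiliary vertices. Each clause becomes a $K_6$ containing its three variable vertices, $t$ or copies of $t$, and auxiliaries, chosen so that a balanced sign pattern exists iff the clause is satisfied, with the sign of $t$ playing the role of ``true''. Copies of $t$, and consistency among multiple occurrences of a variable, are enforced by additional equality gadgets that are themselves $K_6$'s, so that all edges remain disjoint and the graph stays simple. Connectivity comes for free through $t$. For the SDP side I would write down explicit unit vectors: for example $v_t=e_0$, each variable vector $v_x=\tfrac13 e_0+\sqrt{8/9}\,u_x$ with $u_x\perp e_0$, and auxiliaries adjusted so that each gadget sum is zero. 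This gives an explicit optimum of value $9m$ regardless of satisfiability.

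The main obstacle is the design of the gadgets. A naive gadget, with three literals plus three free auxiliaries, is always balanceable, because free auxiliaries can absorb any even imbalance. The auxiliaries must therefore be constrained by sharing them across gadgets, for instance by making two of them copies of $t$ forced through equality gadgets. At the same time the fractional vector solution must still zero out every gadget simultaneously. I would first attempt 1-in-3 SAT with the gadget $\{x_a,x_b,x_c,t',t'',y\}$, where $t',t''$ are forced equal to $t$ and $y$ is forced opposite to $t$. Balance then holds iff exactly one of $x_a,x_b,x_c$ agrees with $t$. After that, I would verify that equality and inequality gadgets built from edge-disjoint $K_6$'s admit a consistent vector solution; that verification is the delicate part of the argument.
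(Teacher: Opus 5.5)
\textbf{Membership.} Your membership argument is the paper's slack-matrix certificate, up to a sign slip. For $\max\frac14\langle L,X\rangle$ the slack is $\operatorname{Diag}(d)-\frac14L$ with $d_i=\frac14\sum_jW_{ij}(1-x_ix_j)$. Your $S$ is its negative, so it would certify that $xx^\top$ minimizes the SDP objective. Likewise, your $\sum_id_i$ equals $\frac14\mathbf 1^\top W\mathbf 1$ minus the cut value, not the cut value.

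\textbf{The gap in the hardness half.} This half is a plan rather than a reduction. The equality and inequality gadgets are never constructed. The existence of a single vector assignment balancing every block for every formula, which you defer as the delicate part, is exactly what must be proved. The natural design provably fails. Suppose a gadget forces $z_{t'}=z_t$ because $e_{t'}-e_t$ is a linear combination of block indicator vectors. Then every vector assignment with zero block sums also has $v_{t'}=v_t$. If $t'$, $t''$, $y$ and the variable occurrences are tied to $t$ and to each other this way, the SDP must solve $v_a+v_b+v_c=-v_t$ for every clause. Now take the seven lines of the Fano plane as a linear positive 1-in-3 instance. It is unsatisfiable, since a set $T$ of true points meeting every line once would need $3|T|=7$. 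The system has no unit-vector solution here. The incidence matrix is invertible, which forces $\langle v_x,v_t\rangle=-\frac13$ for every $x$. The components orthogonal to $v_t$ would then need squared norm $\frac89$ and pairwise inner products $-\frac49$, because every two points share a line. That Gram matrix has eigenvalue $-\frac{16}9$. Hence $\operatorname{SDP}(G)<9m$, and exactness no longer tracks satisfiability. In particular, your suggested vectors cannot work once the copies of $t$ really receive $\pm v_t$. Your gadgets would instead have to force equality only through integrality while tolerating clause-dependent vectors, and none is exhibited.

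\textbf{The missing idea.} Choose clauses that need neither a reference vertex nor shared auxiliaries. Your objection to free auxiliaries applies to a $3+3$ split. With four variables and two private auxiliaries, the auxiliaries cancel an imbalance of $0$ or $\pm2$ but not $\pm4$. So a block is balanceable exactly when its four variables are not all equal. This is monotone NAE-4-SAT, which is invariant under global negation, so no $t$ is needed. Put orthonormal vectors on the variable vertices and $-\frac12\sum_{i\in C_j}v_i$ on both auxiliaries of clause $j$. This zeroes every block at once and gives $\operatorname{SDP}(G)=9m$ unconditionally. The paper obtains edge-disjointness by restricting the source problem to linear instances, which are NP-complete by Schaefer's theorem plus Kun's sparse incomparability theorem, rather than by equality gadgets. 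Connectivity comes from identifying one auxiliary per component. Private auxiliaries also make each unsatisfied clause cost exactly $1$. In rigid gadgets an unbalanced block can cost $\frac14s_g^2\in\{4,9\}$, so your claim that the gap counts unbalanceable gadgets is not justified in general.
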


The proof constructs a graph whose SDP optimum is known explicitly, but whose maximum cut attains that optimum precisely when a given Boolean formula is satisfiable. The source problem is monotone NAE-4-SAT: each clause contains four unnegated variables and is satisfied when their values are not all equal. We use linear instances, in which every clause contains four distinct variables and any two distinct clauses share at most one variable. The NP-completeness of this restriction follows from Schaefer's theorem together with the corollary of Kun's deterministic Sparse Incomparability Theorem stated in Lemma~\ref{lem:kun}.

Each clause is represented by a $K_6$ on its four variable vertices and two private auxiliary vertices. Linearity makes these cliques edge-disjoint, so their contributions to both the cut and SDP objectives can be added. An explicit vector assignment makes every clique contribute $9$ to the SDP objective, attaining its individual upper bound. 

For a fixed Boolean assignment to the variable vertices, optimizing the auxiliary vertices gives a cut contribution of $9$ when the clause is satisfied and $8$ otherwise. Consequently, if the source formula $\Phi$ has $m$ clauses, then
\[
\operatorname{SDP}(G)=9m, \qquad \operatorname{MC}(G) = 8m+\operatorname{OPT}_{\mathrm{NAE}}(\Phi),
\]
where $\operatorname{OPT}_{\mathrm{NAE}}(\Phi)$ is the maximum number of simultaneously satisfied clauses. Thus the additive integrality gap equals the minimum number of unsatisfied clauses, and the relaxation is exact if and only if $\Phi$ is satisfiable. A further vertex-identification step ensures connectedness while preserving both optimum values.

The construction also supplies an exact rational optimal primal--dual pair of polynomial encoding length. Corollary~\ref{cor:supplied-optimum} shows that recognition remains NP-complete for simple unweighted graphs when this pair is included in the input. Thus, even when the continuous optimum is explicitly known and certified, deciding whether a cut attains it remains NP-complete.

Our second main result establishes recognition hardness for the Frieze--Jerrum SDP relaxation of Max-$k$-Cut.

\begin{theorem}\label{thm:kcut-main}
For every fixed integer $k\ge3$, recognizing exactness of the Frieze--Jerrum Max-$k$-Cut SDP relaxation is strongly NP-hard, even for connected graphs with nonnegative integer edge weights.
\end{theorem}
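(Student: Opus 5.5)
Following the strategy behind Theorem~\ref{thm:main}, I would reduce from linear monotone NAE-4-SAT, or for $k\ge3$ more naturally from a $k$-colouring-type constraint problem, to exactness of the Frieze--Jerrum relaxation. Recall that the relaxation maximizes $\frac{k-1}{k}\sum_{ij}W_{ij}(1-X_{ij})$ over $X\succeq0$ with $X_{ii}=1$ and $X_{ij}\ge -\frac1{k-1}$. Integral solutions correspond to vectors drawn from the $k$ vertices of a regular simplex.

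The plan has three steps, carried out in order.

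\textbf{Step 1: choose a source problem.} I would reduce from $k$-colourability of graphs (strongly NP-hard for each fixed $k\ge3$), or from a linear NAE-type CSP over a $k$-ary alphabet. Each constraint would be encoded by a weighted gadget on its variable vertices plus private auxiliary vertices. The gadget must have two properties:
\begin{enumerate}[label=(\roman*)]
\item It admits a simplex-vector assignment achieving the gadget's individual SDP upper bound $s$ for every choice of vectors on the variable vertices.
\item After optimizing the auxiliary part, its integral value is $s$ exactly when the constraint is satisfied and at most $s-1$ otherwise.
\end{enumerate}

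\textbf{Step 2: the simplest gadget.} The simplest candidate is the graph $K_{k+1}$ itself. Its Frieze--Jerrum value with unit weights is $\binom{k+1}{2}$, attained by a simplex-like configuration. Its max $k$-cut, however, is $\binom{k+1}{2}-1$, because two of the $k+1$ vertices must share a part. This is the analogue of the $K_6$ phenomenon in the unweighted Max-Cut construction. A reduction from $k$-colourability then goes as follows. Take the source graph $H$, and for each edge $uv$ of $H$ add a weighted gadget, built from cliques on $u$, $v$ and fresh auxiliary vertices, whose SDP value is a fixed constant regardless of how $u$ and $v$ interact. Its integral value should lose exactly one unit when $u$ and $v$ receive the same colour. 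Summing over edge-disjoint gadgets gives $\mathrm{SDP}=c\,|E(H)|$ together with an explicit optimal matrix. The integral optimum equals $c\,|E(H)|-(\text{minimum number of monochromatic edges})$. Hence exactness holds if and only if $H$ is $k$-colourable. Edge-disjointness is automatic because each gadget uses private auxiliaries and the single edge $uv$. I would verify the SDP upper bound for each gadget by exhibiting a dual certificate, i.e.\ a PSD slack matrix, for each gadget separately. The dual certificates then add, since the SDP is a max over a common $X$ and the gadget objectives are summed.

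\textbf{Step 3: connectivity and weights.} To obtain connectivity I would join components by identifying a vertex in each, or by zero-loss connecting edges. For example, I would identify auxiliary vertices whose SDP vectors can be rotated to coincide, using the rotational symmetry of the Frieze--Jerrum feasible set. All weights are constants depending only on $k$, which gives strong NP-hardness.

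The main obstacle is Step 2, designing a gadget whose SDP optimum is independent of the variable vectors. Because the Frieze--Jerrum constraints $X_{ij}\ge-1/(k-1)$ couple the auxiliary vectors to the variable vectors, the SDP could exploit intermediate angles between $u$ and $v$ to exceed the gadget bound. I would first try the gadget given by $K_{k+1}$ on $\{u,v\}$ plus $k-1$ auxiliaries, with the edge $uv$ deleted or reweighted. I would compute its SDP value via symmetry, reducing to a few-parameter Gram matrix, and check that the bound is attained both when $u,v$ are equal and when they are distinct simplex vectors. If independence fails, I would add a compensating weight on the edge $uv$ to equalize the two cases.
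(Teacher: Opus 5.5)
Your proposal has a genuine gap. The whole content of the theorem sits in Step~2, which you leave open, and none of the gadgets you name has properties (i)--(ii).

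\emph{$K_{k+1}$ is not the analogue of the $K_6$ block.} A unit edge contributes $1$ to the Frieze--Jerrum objective only when $X_{ij}=-1/(k-1)$, and $k+1$ unit vectors cannot realize this for every pair. From $\|\sum_i v_i\|^2\ge0$ one gets $\operatorname{SDP}_k(K_{k+1})=\binom{k+1}{2}-\frac{k+1}{2k}$, for example $16/3$ for $K_4$ with $k=3$, attained by the $(k+1)$-simplex. So $K_{k+1}$ carries an intrinsic gap of $\frac{k-1}{2k}$, like an odd clique for Max-Cut. It is not a bound that a balanced partition attains.

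\emph{$K_{k+1}$ with $uv$ deleted is an equality gadget.} Attaining its bound forces $\{u,a_1,\ldots,a_{k-1}\}$ and $\{v,a_1,\ldots,a_{k-1}\}$ to be regular $k$-simplices. Hence $w_u=w_v$ in the SDP, and $u,v$ receive equal colours discretely.

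\emph{The natural disequality gadget fails (i).} Take $K_k$ on $u,v$ and $k-2$ private auxiliaries. It attains its bound only if every pair in it, in particular $u,v$, has inner product exactly $-1/(k-1)$. Summing over edges, the bound $c\,|E(H)|$ is reached only if $H$ admits a strict vector $k$-colouring, meaning unit vectors with inner product $-1/(k-1)$ on every edge; this fails for $H=K_{k+1}$. So the SDP value depends on $H$, and (i) is lost.

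Your fallback of reweighting $uv$ is unverified. Even a per-gadget attainment argument would not control the constraints $X_{ij}\ge-1/(k-1)$ between non-adjacent vertices of different gadgets, which the relaxation imposes on all pairs.

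The missing idea is to avoid working with $k$ free colours at all. The paper reduces from linear monotone NAE-4-SAT and adds $k-2$ anchors \emph{shared} by all clauses. It sets $b_j$ equal to $1$ on the six vertices of $B_j=C_j\cup\{d_{j,1},d_{j,2}\}$ and $3$ on every anchor, and takes $W$ to be the off-diagonal part of $M=\sum_jb_jb_j^\top$. This gives $F_k(X)=U-\frac{k-1}{2k}\langle M,X\rangle$ with $U=\frac92k(k-1)m$. On partitions it becomes $\operatorname{cut}_k(G,\sigma)=U-\frac12\sum_{j,\ell}(L_{j,\ell}(\sigma)-3)^2$. Discrete attainment therefore forces the anchors onto distinct colours and each block onto the two remaining colours with a $3$--$3$ split, which is exactly the NAE condition.

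On the continuous side, the orthonormal configuration from the $K_6$ reduction is mapped by $w_v=c+\alpha u_v$, with $c=-\frac12\sum_rq_r$ and $\alpha=\sqrt{k/(2(k-1))}$. This cancels every $\sum_v(b_j)_vw_v$ and keeps every pairwise inner product at least $-1/(k-1)$, independently of satisfiability. Because the anchors are adjacent to every vertex, connectivity comes for free and no identification step is needed.
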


A Max-$k$-Cut partition can be viewed as an assignment of one of $k$ colors to each vertex, with an edge contributing its weight when its endpoints receive different colors. To encode a Boolean assignment within such a partition, we introduce $k-2$ anchor vertices shared by all clauses. The construction ensures that, in any partition attaining the SDP bound, the anchors occupy distinct colors and no clause vertex uses an anchor color. The two remaining colors therefore encode the Boolean assignment. An explicit vector assignment attains the SDP bound independently of satisfiability. The proof also expresses the additive gap as the minimum of a sum of squared deviations from the balance required within each clause.

We also give an independent proof of strong NP-completeness of Max-Cut SDP exactness recognition for nonnegative integer weights, using Monotone $\{3,6\}$-in-9 SAT. This construction avoids Kun's theorem and permits contributions from different clauses to accumulate as edge weights.

Two further reductions transfer Max-Cut exactness recognition to related partitioning problems. For Max-DiCut, the objective counts the weights of arcs directed from the first part to the second. Recognizing exactness of the basic SDP~\eqref{eq:dicut-sdp} is strongly NP-complete even for strongly connected digraphs in which every adjacent pair has positive unequal opposite arc weights and every vertex has equal weighted indegree and outdegree. For Max-Bisection, the two parts must have equal size. Recognizing exactness of SDP~\eqref{eq:bisection-sdp} is NP-hard even on connected simple unweighted graphs. Both reductions preserve the Max-Cut additive integrality gap up to an explicit factor. These claims concern the particular SDP formulations displayed in the corresponding subsections.

The rest of the manuscript is organized as follows: Section~\ref{sec:np_completeness} recalls optimality certificates and their signed-Laplacian interpretation. In Section~\ref{sec:unweighted}, we prove Theorem~\ref{thm:main} and the supplied-certificate corollary. Section~\ref{sec:weighted} gives the independent weighted reduction. Theorem~\ref{thm:kcut-main} and the other extensions are resolved in Section~\ref{sec:exactness-applications}. Section~\ref{sec:conclusion} concludes with open questions about stronger relaxations.

\section{Optimality Certificates}
\label{sec:np_completeness}
We recall the certificate underlying NP membership~\cite{DP93_II} in a form that also applies to the anchored Max-DiCut SDP. Let $\mathbb S^N$ denote the real symmetric $N\times N$ matrices, with inner product $\langle A,B\rangle=\operatorname{Tr}(AB)$.

\begin{lemma}[Rank-one optimality certificate]\label{lem:rank-one-certificate}
Let $B\in\mathbb S^N$ be rational and consider
\[
\max\{\langle B,X\rangle:X\succeq0,\ X_{ii}=1\}.
\]
For $z\in\{-1,1\}^N$, define $y_i=z_i(Bz)_i$. Then $zz^\top$ is optimal if and only if $\operatorname{Diag}(y)-B\succeq0$. The vector $y$ has polynomial encoding length and this condition can be checked in polynomial time.
\end{lemma}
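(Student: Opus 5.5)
The plan is to use SDP weak duality for the pair
\[
\max\{\langle B,X\rangle: X\succeq0,\ X_{ii}=1\},\qquad \min\{\textstyle\sum_i y_i: \operatorname{Diag}(y)-B\succeq0\},
\]
and to exploit the fact that the dual variable is forced by complementary slackness once the primal optimum is the rank-one matrix $zz^\top$. For the reverse implication ("if"), I will assume $S:=\operatorname{Diag}(y)-B\succeq0$. Then $Sz=y\circ z-Bz$, and its $i$-th entry is $y_iz_i-(Bz)_i=z_i^2(Bz)_i-(Bz)_i=0$, so $Sz=0$. For any feasible $X$, we have $\langle S,X\rangle\ge0$, which gives $\langle B,X\rangle\le\sum_i y_iX_{ii}=\sum_i y_i$. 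On the other hand, $\sum_i y_i=z^\top Bz=\langle B,zz^\top\rangle$. Hence $zz^\top$ attains the upper bound and is optimal.

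For the forward implication ("only if"), I will argue directly and avoid strong duality. Assume $zz^\top$ is optimal, and suppose for contradiction that some $w$ has $w^\top Sw<0$. I will perturb $zz^\top$ along a direction that stays feasible. A convenient choice is $X(t)=zz^\top+t(\operatorname{Diag}(z)\,M\,\operatorname{Diag}(z))$, but the cleaner route is first to conjugate by $D=\operatorname{Diag}(z)$. This reduces to the case $z=\mathbf 1$, because $D$ is a signed permutation-like orthogonal map that preserves the elliptope and sends $B$ to $DBD$, with $y$ unchanged. With $z=\mathbf 1$ we have $y_i=(B\mathbf 1)_i$ and $S=\operatorname{Diag}(B\mathbf 1)-B$, a Laplacian-like matrix with $S\mathbf 1=0$. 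We may therefore take $w\perp\mathbf 1$.

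Consider $X(t)=(1-t^2)^{-1}\bigl(\mathbf 1\mathbf 1^\top+t(\mathbf 1w^\top+w\mathbf 1^\top)+t^2ww^\top\bigr)$ after diagonal rescaling. Equivalently, take the Gram matrix of the unit vectors $v_i=(\mathbf 1_{\text{dir}}+t w_i e)/\sqrt{1+t^2w_i^2}$, with $e$ orthogonal to the first coordinate direction. Expanding $\langle B,X(t)\rangle$ to second order in $t$, the first-order term vanishes and the second-order coefficient is $-w^\top Sw>0$. This contradicts optimality for small $t$. This expansion is the main obstacle. I expect the normalization terms $-\tfrac12t^2(w_i^2+w_j^2)$ to combine with the cross term $t^2w_iw_j$ to give exactly $-\tfrac12 t^2\sum_{ij}B_{ij}(w_i-w_j)^2=t^2(-w^\top Sw)$ up to sign. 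Only the identity $\sum_{ij}B_{ij}(w_i-w_j)^2=2w^\top Sw$ needs checking, and it follows because $S$ has zero row sums. As an alternative, I can cite strong duality (Slater holds, since $X=I$ is strictly feasible and the dual is strictly feasible with large $y$) together with complementary slackness $SX=0$. This forces $Sz=0$, hence $y_i=z_i(Bz)_i$ for any optimal dual, so the dual optimum must be this $y$ and is therefore PSD.

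Finally, $y$ has polynomial encoding length, since its entries are sums of at most $N$ rationals from $B$ with signs. Positive semidefiniteness of the rational matrix $S$ can be decided in polynomial time by exact Gaussian elimination or an $LDL^\top$ decomposition over $\mathbb Q$, or equivalently by checking the signs of all principal minors via Bareiss elimination with pivoting, all of which have polynomially bounded bit size.
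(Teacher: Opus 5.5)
Your proof is correct. The ``if'' direction and the complexity claims match the paper: weak duality with the explicit $y$, and exact rational elimination for the semidefiniteness test. The difference lies in the ``only if'' direction.

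The paper uses strong duality with dual attainment (Slater at $X=I$) and complementary slackness. Any optimal dual slack $S^*$ satisfies $z^\top S^*z=0$, hence $S^*z=0$, which forces the optimal dual to equal your $y$. This is exactly the fallback you mention. Your primary route is instead a direct second-order argument. After the congruence by $\operatorname{Diag}(z)$, you move along the Gram matrix of $v_i(t)=(e_1+tw_ie_2)/\sqrt{1+t^2w_i^2}$. For this curve $X_{ij}(t)=1-\tfrac12t^2(w_i-w_j)^2+O(t^4)$, so $\langle B,X(t)\rangle=\mathbf 1^\top B\mathbf 1-t^2w^\top Sw+O(t^4)$. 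The identity $\tfrac12\sum_{i,j}B_{ij}(w_i-w_j)^2=w^\top Sw$ follows by direct expansion using symmetry of $B$.

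Your route gives a self-contained proof with no duality theory, and it is constructive: any $w$ with $w^\top Sw<0$ yields an explicit strictly better feasible solution of rank at most two. The duality route is shorter, and it also shows that $y$ is the unique optimal dual solution whenever $zz^\top$ is optimal.

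Two slips should be removed.
\begin{enumerate}
\item The formula $X(t)=(1-t^2)^{-1}\bigl(\mathbf 1\mathbf 1^\top+t(\mathbf 1w^\top+w\mathbf 1^\top)+t^2ww^\top\bigr)$ is not equivalent to your Gram construction. It is a multiple of the rank-one matrix $(\mathbf 1+tw)(\mathbf 1+tw)^\top$, and rescaling it to unit diagonal just returns $\mathbf 1\mathbf 1^\top$ for small $t$. Keep only the vectors $v_i(t)$.
\item Checking the signs of all principal minors would require $2^N$ determinants. Rely only on the pivoted $LDL^\top$ (or Gaussian) elimination over $\mathbb Q$.
\end{enumerate}
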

\begin{proof}
The primal is strictly feasible at $I$ and has compact feasible region. Its dual minimizes $\sum_i y_i$ subject to $S=\operatorname{Diag}(y)-B\succeq0$, and strong duality and dual attainment hold. If $zz^\top$ is optimal, complementary slackness gives $Sz=0$, forcing $y_i=z_i(Bz)_i$. Conversely, for this choice of $y$, dual feasibility and $\sum_i y_i=z^\top Bz$ certify optimality. Rational matrix--vector multiplication gives $y$ with polynomial encoding length. Positive semidefiniteness of a rational symmetric matrix is decidable in polynomial bit complexity by exact
Gaussian elimination; see~\cite[Section~2.3.3]{LaurentNotes}.
\end{proof}

\begin{corollary}\label{cor:np-membership}
Recognizing exactness of the standard Max-Cut SDP for rational edge weights belongs to NP.
\end{corollary}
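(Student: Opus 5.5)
The plan is to derive Corollary~\ref{cor:np-membership} directly from Lemma~\ref{lem:rank-one-certificate}, using a maximum cut as the NP witness. First I will rewrite the standard SDP~\eqref{eq:sdp-intro} in the form covered by the lemma. Since $X_{ii}=1$ and $W$ has zero diagonal,
\[
\frac14\sum_{i\ne j}W_{ij}(1-X_{ij})=\frac14\sum_{i\ne j}W_{ij}-\frac14\langle W,X\rangle .
\]
Maximizing the SDP objective is therefore the same as maximizing $\langle B,X\rangle$ over the elliptope, with $B=-\tfrac14W$. The matrix $B$ is rational and symmetric, and it has polynomial encoding length. The additive constant is irrelevant to the location of optimal solutions. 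It is also the same for every feasible point, so a cut matrix $zz^\top$ achieves $\operatorname{SDP}(G)$ in the original formulation exactly when it is optimal for $\max\langle B,X\rangle$.

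Next I will use the equivalence stated in the introduction: the relaxation is exact if and only if some rank-one feasible matrix is optimal. Rank-one feasible matrices are precisely the matrices $zz^\top$ with $z\in\{-1,1\}^n$, because the diagonal is all ones. On such a matrix the SDP objective equals the cut value of $z$.
\begin{itemize}
\item If $\operatorname{SDP}(G)=\operatorname{MC}(G)$, then a maximum cut $z$ gives an optimal $zz^\top$.
\item Conversely, if some $zz^\top$ is optimal, then $\operatorname{MC}(G)\ge\operatorname{SDP}(G)$. Combined with the relaxation inequality $\operatorname{SDP}(G)\ge\operatorname{MC}(G)$, this gives equality.
\end{itemize}

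The certificate is then a vector $z\in\{-1,1\}^n$, which has linear size. The verifier performs three steps:
\begin{enumerate}
\item Compute $y_i=z_i(Bz)_i$.
\item Form the matrix $\operatorname{Diag}(y)-B$.
\item Test whether this matrix is positive semidefinite by exact rational Gaussian elimination.
\end{enumerate}
By Lemma~\ref{lem:rank-one-certificate}, the test succeeds exactly when $zz^\top$ is optimal, and it runs in polynomial time. Hence yes-instances have short, efficiently checkable certificates, and no certificate is accepted on a no-instance.

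The only point needing care is the reduction to the lemma's form. One must check the sign and the constant offset, and confirm that the equality-constrained feasible set matches the lemma's exactly. After that, the bit-complexity claims are inherited from the lemma, so I expect no real obstacle.
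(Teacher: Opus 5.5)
Your proof is correct and follows the paper's argument. In both, a cut vector $z$ is the witness, and Lemma~\ref{lem:rank-one-certificate} turns optimality of $zz^\top$ into a polynomial-time positive-semidefiniteness test on a rational slack matrix. The only difference is cosmetic: the paper writes the objective as $\frac14\langle L,X\rangle$ with $L=D-W$, while your $B=-\frac14W$ differs from $\frac14L$ by a diagonal matrix that is constant on the elliptope, and it produces exactly $\frac14$ of the paper's slack matrix $S$ in~\eqref{eq:cut-certificate}.
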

\begin{proof}
The objective is $\langle L,X\rangle/4$, where $L=D-W$. By Lemma~\ref{lem:rank-one-certificate}, a certificate consists only of a cut vector $x\in\{-1,1\}^n$. The verifier forms
\begin{equation}\label{eq:cut-certificate}
y_i=x_i(Lx)_i=\sum_{j\ne i}W_{ij}(1-x_ix_j),\qquad
S=\operatorname{Diag}(y)-L,
\end{equation}
and checks $S\succeq0$. If the test succeeds, the cut and dual values both equal $\frac14\sum_i y_i$. Conversely, any optimal cut of an exact instance passes the test.
\end{proof}
Thus each $y_i$ is twice the weight of cut edges incident to vertex $i$. In particular, the certificate does not require an arbitrary real solution of the dual SDP. If a maximum cut is already available, its slack matrix decides exactness.

\subsection{Signed-Laplacian interpretation}
\label{sec:structural_characterization}
Hong, Lee, and Wei~\cite[Lemma~1]{hong2021} express rank-one optimality through a positive semidefinite Laplacian after changing edge signs. We recall the equivalent certificate and its interpretation.
\begin{definition}[Cut-signed Laplacian]
For a cut vector $x\in\{-1,1\}^n$, set $W^{(x)}_{ij}=-W_{ij}x_ix_j$. The cut-signed Laplacian is $L^{(x)}=D^{(x)}-W^{(x)}$, where $D^{(x)}_{ii}=\sum_{j\ne i}W^{(x)}_{ij}$. Crossing edges retain their weights; uncut edges have their signs reversed.
\end{definition}
\begin{proposition}\label{thm:structural_exactness}
A graph is exact if and only if there is a cut vector $x$ with $L^{(x)}\succeq0$.
\end{proposition}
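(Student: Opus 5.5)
The plan is to show that, for a fixed cut vector $x$, the cut-signed Laplacian $L^{(x)}$ is exactly the slack matrix $S=\operatorname{Diag}(y)-L$ from \eqref{eq:cut-certificate}. The proposition then follows directly from Corollary~\ref{cor:np-membership} and Lemma~\ref{lem:rank-one-certificate}. Throughout I take $\operatorname{Diag}(x)$ as the diagonal sign matrix, whose congruences preserve positive semidefiniteness, and I take exactness to mean the existence of a rank-one optimal solution $xx^\top$.

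The key steps are as follows.

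First, compare diagonals. By definition $D^{(x)}_{ii}=\sum_{j\ne i}W^{(x)}_{ij}=-\sum_{j\ne i}W_{ij}x_ix_j$. Meanwhile $S_{ii}=y_i-L_{ii}=\sum_{j\ne i}W_{ij}(1-x_ix_j)-\sum_{j\ne i}W_{ij}=-\sum_{j\ne i}W_{ij}x_ix_j$, so the diagonals agree.

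Second, compare off-diagonal entries. For $i\ne j$ we have $L^{(x)}_{ij}=-W^{(x)}_{ij}=W_{ij}x_ix_j$, whereas $S_{ij}=-L_{ij}=W_{ij}$. The two are therefore not literally equal. Instead they satisfy
\[
L^{(x)}=\operatorname{Diag}(x)\,S\,\operatorname{Diag}(x).
\]
To confirm this, note that conjugating by $\operatorname{Diag}(x)$ multiplies the $(i,j)$ entry by $x_ix_j$ and leaves the diagonal unchanged since $x_i^2=1$.

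Third, conclude. Because $\operatorname{Diag}(x)$ is invertible, the congruence gives $L^{(x)}\succeq0$ if and only if $S\succeq0$. By Lemma~\ref{lem:rank-one-certificate}, applied with $B=L/4$ (the positive factor does not matter), $S\succeq0$ holds if and only if $xx^\top$ is SDP-optimal. The graph is exact precisely when some cut matrix $xx^\top$ is optimal, since then $\operatorname{MC}(G)=\operatorname{SDP}(G)$; conversely, if the values are equal, a maximum cut is such an $xx^\top$. Together these give the equivalence.

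No step is a real obstacle; the only care needed is sign bookkeeping. In particular I must check that the diagonal of $L^{(x)}$, defined through $W^{(x)}$, is the same as the diagonal of the slack matrix. This is the computation in the first step.
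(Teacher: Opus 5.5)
Your proposal is correct and follows the paper's proof essentially verbatim: you compute the entries of the slack matrix $S$ from \eqref{eq:cut-certificate}, verify $\operatorname{Diag}(x)\,S\,\operatorname{Diag}(x)=L^{(x)}$, and then combine invariance of positive semidefiniteness under this congruence with Lemma~\ref{lem:rank-one-certificate}. Your explicit remarks on the harmless scaling $B=L/4$ and on exactness being equivalent to optimality of some cut matrix $xx^\top$ spell out details that the paper leaves implicit.
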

\begin{proof}
For the slack matrix in~\eqref{eq:cut-certificate}, $S_{ij}=W_{ij}$ for $i\ne j$ and $S_{ii}=-\sum_{j\ne i}W_{ij}x_ix_j$. With $P=\operatorname{Diag}(x)$, entrywise calculation gives $PSP=L^{(x)}$. Since $P=P^{-1}$, this congruence preserves positive semidefiniteness. The claim follows from Lemma~\ref{lem:rank-one-certificate}.
\end{proof}
For every $z\in\mathbb R^n$,
\[
z^\top L^{(x)}z
=
\sum_{i<j}W^{(x)}_{ij}(z_i-z_j)^2.
\]
When the original weights are nonnegative, crossing edges contribute nonnegative terms and uncut edges contribute nonpositive terms. Let $G_{\mathrm{cut}}$ and $G_{\mathrm{uncut}}$ be the corresponding spanning subgraphs, retaining their original weights. Then
\[
L^{(x)}=L(G_{\mathrm{cut}})-L(G_{\mathrm{uncut}}).
\]
Exactness therefore means that some cut satisfies $L(G_{\mathrm{cut}})\succeq L(G_{\mathrm{uncut}})$: its crossing-edge contribution dominates the uncut-edge contribution for every real vector $z$. This is a spectral condition on a bipartite core relative to the chosen partition, rather than a requirement that the whole graph be bipartite. Graph-join examples in~\cite{GBNP25} illustrate how internal edges can coexist with this domination.

This instance-dependent condition differs from exactness of a polyhedral relaxation for every objective on a fixed graph. For graphs with no $K_5$ minor, cycle inequalities and edge bounds describe the cut polytope~\cite{barahona1986cut}; the same restriction does not ensure SDP exactness. The unweighted triangle, for example, has cut value $2$ and SDP value $9/4$.

\section{NP-Completeness for Connected Simple Unweighted Graphs}
\label{sec:unweighted}
We prove Theorem~\ref{thm:main} by reducing a Boolean satisfiability problem to Max-Cut SDP exactness. Given a formula, we construct a graph whose SDP optimum is known explicitly and whose maximum cut attains that value if and only if the formula is satisfiable. We first establish NP-completeness for the restricted satisfiability problem used in the reduction.

\subsection{The Base Combinatorial Problem}
We use \textnormal{\scshape Monotone Not-All-Equal (NAE)-4-SAT} as the base problem. An instance consists of a set of Boolean variables and a collection of clauses, each containing four distinct, unnegated variables. A clause is satisfied when its variables are not all assigned the same value. An instance is \emph{linear} if any two distinct clauses share at most one variable.

The following corollary of Kun's theorem \cite[Theorem~1]{kun2013constraints} allows us to impose this restriction while preserving satisfiability. We state it explicitly as it is the only consequence of that theorem needed here.

\begin{lemma}[Corollary of {\cite[Theorem~1]{kun2013constraints}}]
\label{lem:kun}
There is a deterministic polynomial-time algorithm that, given a collection $\Phi$ of unnegated NAE constraints with four argument positions each, returns a linear monotone NAE-4-SAT instance $\Phi'$ such that $\Phi'$ is satisfiable if and only if $\Phi$ is. Variables may repeat within an input constraint; every output clause contains four distinct variables.
\end{lemma}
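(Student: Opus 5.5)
The plan is to derive Lemma~\ref{lem:kun} from Kun's deterministic Sparse Incomparability Theorem~\cite[Theorem~1]{kun2013constraints}. The constraint language here is the single relation $R=\mathrm{NAE}_4\subseteq\{0,1\}^4$. First, I would recall the form of that theorem. For a fixed finite constraint language $\Gamma$ and fixed parameters (girth $g$, a bound on the relational structure size), there is a deterministic polynomial-time construction. Given an instance (a $\Gamma$-structure) $A$, it produces a structure $A'$ of girth greater than $g$ such that $A'\to A$ homomorphically. Moreover, for every structure $B$ of bounded size, $A'\to B$ holds if and only if $A\to B$. I would apply it with $B$ equal to the two-element template $(\{0,1\};\mathrm{NAE}_4)$ and girth at least $3$. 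Note that $A\to B$ is exactly satisfiability of $\Phi$, and $A'\to B$ is satisfiability of $\Phi'$, so the equivalence is immediate.

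Next I would translate girth into the two syntactic requirements. In the hypergraph (incidence) sense, girth is the length of a shortest Berge cycle in the incidence bipartite graph. A tuple with a repeated variable gives a cycle of length $1$. Two distinct tuples sharing two distinct variables give a cycle of length $2$. So girth at least $3$ (actually $>2$) forces each output tuple to have four distinct entries and any two tuples to share at most one variable. One further degenerate case needs care: two identical tuples listed twice. I would remove duplicate constraints, which does not affect satisfiability. Identical tuples are also excluded by the girth-2 condition if the multiset counts them as a cycle. The output is therefore a linear monotone NAE-4-SAT instance with all clauses on distinct variables.

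Finally, I would confirm the two algorithmic points. The construction is deterministic and polynomial because the language and girth are fixed constants. The output remains monotone because Kun's construction only produces tuples in the same relation symbol $\mathrm{NAE}_4$, with variables substituted and no negations introduced. The main obstacle I expect is matching precisely the version of Kun's theorem, including its definition of girth for structures with repeated coordinates, and checking that the target $B$ is within the allowed size bound. If the theorem's girth notion ignores repetitions inside a tuple, I would add a preprocessing step. That step replaces each input constraint containing repeated variables by one on fresh distinct copies, with equality enforced via NAE gadgets. It is easier, though, to rely on the theorem's standard incidence-graph girth, which already excludes repeated entries.
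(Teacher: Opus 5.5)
Your proposal is correct and follows the paper's proof: both apply Kun's deterministic Sparse Incomparability Theorem to the single relation $R_{\mathrm{NAE}}$, using the two-element template (domain size below $t=3$) and girth bound two. Both then obtain four distinct variables per clause and pairwise intersections of size at most one from the absence of cycles of length one and two, and monotonicity from the unchanged relation symbol. Your extra handling of duplicate tuples and the fallback equality gadget are harmless but not needed, since Kun's convention already counts a repeated entry as a cycle of length one.
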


\begin{proof}
Use the Boolean constraint relation
\[
R_{\mathrm{NAE}} = \{0,1\}^4 \setminus \{(0,0,0,0),(1,1,1,1)\}.
\]
For a fixed finite constraint relation and a fixed girth bound, Kun's theorem constructs, in deterministic polynomial time, an instance using the same relation that preserves satisfiability and exceeds the prescribed girth bound. Here we take the bound to be two. In the notation of \cite[Theorem~1]{kun2013constraints}, the parameters are $t=3$ and $k=2$: the Boolean domain has size two, which is less than $t$.

In Kun's convention, a constraint with a repeated variable constitutes a cycle of length one. Two distinct constraints containing at least two common distinct variables give a cycle of length two. The output has girth greater than two, so neither can occur. Every output constraint therefore has four distinct variables, and any two distinct output constraints share at most one variable.

The output uses the same unnegated NAE relation. Since this relation is invariant under coordinate permutations, each output constraint can be read as an unordered four-element clause. The resulting formula is the required linear instance $\Phi'$.
\end{proof}

\begin{lemma}\label{lem:linear-nae4}
\textnormal{\scshape Monotone NAE-4-SAT} is NP-complete even when restricted to linear instances.
\end{lemma}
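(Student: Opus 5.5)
Membership in NP is immediate: a satisfying assignment is a polynomial-size certificate, and checking that every clause contains both a true and a false variable takes linear time. The substance is hardness, which I would obtain in two stages. First, establish NP-hardness of deciding satisfiability for collections of unnegated NAE constraints with four argument positions, where variables may repeat within a constraint. Then apply Lemma~\ref{lem:kun} to convert such a collection, in deterministic polynomial time and preserving satisfiability, into a linear monotone NAE-4-SAT instance. Composing the two reductions gives the claim.

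For the first stage I would invoke Schaefer's dichotomy theorem applied to the single relation $R_{\mathrm{NAE}}=\{0,1\}^4\setminus\{(0,0,0,0),(1,1,1,1)\}$. Schaefer's theorem places $\mathrm{CSP}(\{R_{\mathrm{NAE}}\})$ in P only if $R_{\mathrm{NAE}}$ is $0$-valid, $1$-valid, Horn, dual-Horn, bijunctive, or affine; otherwise the problem is NP-complete. I would refute each case by a witness:
\begin{itemize}
\item $R_{\mathrm{NAE}}$ is neither $0$-valid nor $1$-valid, since it excludes both constant tuples.
\item It is not closed under coordinatewise AND, because $(1,0,0,0)\wedge(0,1,1,1)=(0,0,0,0)$; so it is not Horn.
\item Dually, it is not closed under OR, because $(0,1,1,1)\vee(1,0,0,0)=(1,1,1,1)$; so it is not dual-Horn.
\item It is not closed under majority, because $\mathrm{maj}\bigl((1,1,0,0),(1,0,1,0),(1,0,0,1)\bigr)=(1,0,0,0)$, which does lie in the relation. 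This witness therefore fails, and a better choice is $\mathrm{maj}\bigl((1,1,0,0),(0,0,1,1),(1,1,1,0)\bigr)=(1,1,1,0)$, which also lies in it. I would instead use the fact that a bijunctive relation is determined by its binary projections. All binary projections of $R_{\mathrm{NAE}}$ are full, so a bijunctive $R_{\mathrm{NAE}}$ would equal $\{0,1\}^4$, which is a contradiction.
\item It is not affine, since $|R_{\mathrm{NAE}}|=14$ is not a power of two.
\end{itemize}
Hence $\mathrm{CSP}(\{R_{\mathrm{NAE}}\})$ is NP-complete. In Schaefer's framework constraint applications may repeat variables, which is exactly the input format that Lemma~\ref{lem:kun} accepts.

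Alternatively, and more elementarily, I would reduce from monotone NAE-3-SAT, which is known to be NP-complete. Each clause $\{a,b,c\}$ becomes the four-position constraint $\mathrm{NAE}(a,b,c,c)$, which is equivalent to $\mathrm{NAE}(a,b,c)$. This again yields unnegated NAE constraints with repeated variables.

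Finally, I would feed the resulting collection $\Phi$ into Lemma~\ref{lem:kun}. This produces a linear instance $\Phi'$ with four distinct variables per clause, pairwise sharing at most one variable, such that $\Phi'$ is satisfiable if and only if $\Phi$ is. The main obstacle lies entirely in Lemma~\ref{lem:kun}, that is, in the girth-raising step of Kun's theorem, and the paper has already taken it as given. The only remaining care is the bookkeeping in the first stage: I need to check that repeated variables are permitted there, and that the repeated-coordinate trick preserves the NAE semantics.
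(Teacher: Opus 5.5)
Your proposal is correct and follows the paper's route: Schaefer's dichotomy for $R_{\mathrm{NAE}}$ with repeated variables allowed, then Lemma~\ref{lem:kun} to pass to linear instances, with NP membership by direct checking. The only divergence is the bijunctive case, where your binary-projection (2-decomposability) argument is valid but the two abandoned majority witnesses should be dropped, since $\mathrm{maj}\bigl((1,0,0,0),(0,1,0,0),(0,0,1,0)\bigr)=(0,0,0,0)\notin R_{\mathrm{NAE}}$ is a direct witness, as in the paper; your alternative padding $\mathrm{NAE}(a,b,c,c)$ from monotone NAE-3-SAT is also sound and would avoid Schaefer entirely.
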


\begin{proof}
First allow variables to repeat within a constraint. This is the Boolean constraint satisfaction problem for the relation $R_{\mathrm{NAE}}$ above. By Schaefer's Dichotomy Theorem~\cite{schaefer1978complexity}, this problem is NP-complete unless the relation belongs to one of six tractable classes. We exclude these possibilities.

The relation is neither 0-valid nor 1-valid, since it contains neither constant tuple. The assignments $(1,0,0,0)$ and $(0,1,0,0)$ show that it is not closed under conjunction, and their complements show that it is not closed under disjunction. Thus the relation is neither Horn nor dual-Horn. The coordinatewise majority of
\[
(1,0,0,0),\qquad (0,1,0,0),\qquad (0,0,1,0)
\]
is $(0,0,0,0)$, so the relation is not bijunctive. Finally, its cardinality is $14$, which is not a power of two, so it is not affine.

Lemma~\ref{lem:kun} transforms these instances into linear monotone NAE-4-SAT instances in deterministic polynomial time, preserving satisfiability. This proves NP-hardness. Membership in NP follows by checking a proposed Boolean assignment against every clause.
\end{proof}

\subsection{The \texorpdfstring{$K_6$}{K6} Construction}
Given a restricted NAE-4-SAT instance, we construct a simple unweighted graph $G = (V, E)$. We may assume that the source instance has at least one clause: adjoining a clause on four fresh variables preserves satisfiability and the linearity restriction. For each clause $C_j = \{x_{j_1}, x_{j_2}, \allowbreak x_{j_3}, x_{j_4}\}$, we introduce two private auxiliary vertices, $d_{j,1}$ and $d_{j,2}$. We construct a clique ($K_6$) on these 6 vertices.

The global graph $G$ is the union of these $m$ cliques. As the source instance is linear, any two clauses share at most one variable. Consequently, any two distinct $K_6$ blocks share at most one vertex, and therefore no edges. $G$ is simple and unweighted, and has exactly $15m$ edges, partitioned into $m$ edge-disjoint copies of $K_6$.

\begin{figure}[htbp]
    \centering
    \begin{tikzpicture}[
        var_node/.style={circle, draw=black, fill=black!20, inner sep=1pt, minimum size=7mm, text=black, font=\sffamily\small},
        aux_node/.style={regular polygon, regular polygon sides=4, draw=black, fill=white, thick, inner sep=1pt, minimum size=7mm, font=\sffamily\small},
        edge/.style={black!90}
    ]
    
    % Coordinates for C1 (Left K6)
    \coordinate (x4) at (0,0);
    \coordinate (x3) at (-1.2, 1.8);
    \coordinate (x2) at (-3.2, 1.8);
    \coordinate (x1) at (-4.4, 0);
    \coordinate (d11) at (-3.2, -1.8);
    \coordinate (d12) at (-1.2, -1.8);
    
    % Coordinates for C2 (Right K6)
    \coordinate (x5) at (1.2, 1.8);
    \coordinate (x6) at (3.2, 1.8);
    \coordinate (x7) at (4.4, 0);
    \coordinate (d21) at (3.2, -1.8);
    \coordinate (d22) at (1.2, -1.8);
    
    % Draw edges for C1
    \foreach \i in {x4, x3, x2, x1, d11, d12} {
        \foreach \j in {x4, x3, x2, x1, d11, d12} {
            \draw[edge] (\i) -- (\j);
        }
    }
    
    % Draw edges for C2
    \foreach \i in {x4, x5, x6, x7, d21, d22} {
        \foreach \j in {x4, x5, x6, x7, d21, d22} {
            \draw[edge] (\i) -- (\j);
        }
    }
    
    % Draw nodes over the edges
    % Shared variable
    \node[var_node] at (x4) {\(x_4\)};
    
    % C1 Variables and Auxiliaries
    \node[var_node] at (x1) {\(x_1\)};
    \node[var_node] at (x2) {\(x_2\)};
    \node[var_node] at (x3) {\(x_3\)};
    \node[aux_node] at (d11) {\(d_{1,1}\)};
    \node[aux_node] at (d12) {\(d_{1,2}\)};
    
    % C2 Variables and Auxiliaries
    \node[var_node] at (x5) {\(x_5\)};
    \node[var_node] at (x6) {\(x_6\)};
    \node[var_node] at (x7) {\(x_7\)};
    \node[aux_node] at (d21) {\(d_{2,1}\)};
    \node[aux_node] at (d22) {\(d_{2,2}\)};
    
    % Labels for the cliques
    \node[text=black!80] at (-2.2, 2.7) {Clause $C_1$};
    \node[text=black!80] at (2.2, 2.7) {Clause $C_2$};
    
    \end{tikzpicture}
    \caption{\small The graph \(G\) constructed from the restricted NAE-4-SAT formula \(\Phi = C_1 \land C_2\), where \(C_1 = \{x_1, x_2, x_3, x_4\}\) and \(C_2 = \{x_4, x_5, x_6, x_7\}\). Because the hypergraph is linear, the clauses share exactly one variable (\(x_4\)) and no edge. Variable vertices are plotted as filled circles, while the private auxiliary vertices (\(d_{j,1}, d_{j,2}\)) are plotted as empty squares. The resulting global graph consists of two edge-disjoint copies of \(K_6\).}
    \label{fig:k6_construction}
\end{figure}
\subsection{Continuous Bounds via Geometric Embedding}
For unit vectors $w_1, \allowbreak w_2 \ldots,w_6$, the contribution of a $K_6$ block to the SDP objective is
\begin{equation}
\begin{aligned}
    F(w_1,\ldots,w_6)
    &:= \frac14 \sum_{i\ne k}
        (1-\langle w_i,w_k\rangle)\\
    &= 9-\frac14
        \left\|\sum_{i=1}^{6}w_i\right\|^2
    \le 9.
\end{aligned}
\end{equation}
Since $\left\|\sum w_i\right\|^2 \ge 0$, the SDP objective value for a single $K_6$ block is bounded above by $9$. As the $m$ cliques in $G$ are edge-disjoint, the global SDP objective value is bounded above by $9m$.

We claim that the SDP relaxation of $G$ always achieves an objective value of exactly $9m$. It is not immediate since a variable vertex may belong to several blocks and must receive the same vector in all of them. To see this, assign the $n$ global variables to mutually orthogonal unit vectors $v_1, \dots, v_n \in \mathbb{R}^n$. For a given clause $C_j$, the sum of its four corresponding orthogonal vectors is $S_j = \sum_{k=1}^4 v_{j_k}$. The squared norm is $\|S_j\|^2 = 4$. 

We assign the value of $-\frac{1}{2} S_j$ to the two local auxiliary vertices($u_{j,1} = u_{j,2} = -\frac{1}{2} S_j$). The squared norm of each of these vectors is $\|u_{j,1}\|^2 = \frac{1}{4}(4) = 1$, proving they are valid unit vectors. The sum of the 6 vectors for this block is $S_j - \frac{1}{2}S_j - \frac{1}{2}S_j = 0$. Thus, every $K_6$ block contributes exactly \(9\) to the SDP objective. The continuous SDP relaxation yields $\operatorname{SDP}(G) = 9m$.

\subsection{Combinatorial Exactness}
For the integer Max-Cut to achieve $9m$, every $K_6$ clique must independently achieve a cut of 9. Let $z \in \{-1, 1\}^{|V|}$ be a Boolean assignment. The integer cut for a single $K_6$ block is $9 - \frac{1}{4}(\sum z_i)^2$. To achieve a cut of 9, the sum of the 6 vertices must be 0, requiring a strict 3-versus-3 partition.

Let $s_j = \sum_{k=1}^4 z_{j_k} \in \{-4, -2, 0, 2, 4\}$ be the sum of the four variables, and $t_j = z_{d_{j,1}} + z_{d_{j,2}} \in \{-2, 0, 2\}$ be the sum of the two auxiliary variables. The zero-sum constraint requires $s_j + t_j = 0$, or equivalently, $s_j = -t_j$.

If the clause is satisfied, then $s_j\in\{-2,0,2\}$, and the auxiliary signs can be chosen so that $t_j=-s_j$. The block then contributes $9$ to the cut. If the clause is unsatisfied, then $s_j=\pm4$. The smallest possible value of $|s_j+t_j|$ is then $2$, so the largest cut contribution is $9-\frac14(2)^2=8$.

More precisely, let $\Phi$ denote the source instance, and let $\sigma$ be a Boolean assignment to its variables. Place each variable vertex on the side of the cut corresponding to its assigned value; in sign notation, set $z_{x_i}=2\sigma(x_i)-1$. The signs of the auxiliary vertices remain to be chosen. We say that a cut vector $z$ \emph{extends $\sigma$} if it has these prescribed signs on the variable vertices.

Let $\operatorname{sat}_{\Phi}(\sigma)$ denote the number of clauses satisfied by an assignment $\sigma$ to its variables. For a fixed assignment $\sigma$, optimizing the two auxiliary vertices in a $K_6$ block yields a contribution of $9$ if the corresponding clause is satisfied, and $8$ otherwise. Since the $K_6$ blocks are edge-disjoint and their auxiliary vertices are private to each clause, these choices can be made independently. Therefore,
\[
\max_{z \text{ extending } \sigma} \operatorname{MC}(G,z) = 9\operatorname{sat}_{\Phi}(\sigma) + 8\bigl(m-\operatorname{sat}_{\Phi}(\sigma)\bigr) = 8m+\operatorname{sat}_{\Phi}(\sigma).
\]
Define
\[
\operatorname{OPT}_{\mathrm{NAE}}(\Phi) = \max_{\sigma}\operatorname{sat}_{\Phi}(\sigma).
\]
Maximizing over $\sigma$ gives
\[
\operatorname{MC}(G) = 8m+\operatorname{OPT}_{\mathrm{NAE}}(\Phi).
\]
Together with $\operatorname{SDP}(G)=9m$, this yields the exact additive gap identity
\begin{equation}
\label{eq:exact-gap}
\operatorname{SDP}(G)-\operatorname{MC}(G) = m-\operatorname{OPT}_{\mathrm{NAE}}(\Phi).
\end{equation}
Thus the additive integrality gap equals the minimum number of unsatisfied clauses over all Boolean assignments. In particular, the SDP relaxation is exact if and only if $\Phi$ is satisfiable. This proves that recognizing SDP exactness is NP-hard for simple, unweighted graphs.

\begin{remark}[Connected instances]\label{rem:connected}
The graph need not be connected, but we can ensure connectedness while preserving both optimum values. First discard variables that occur in no clause, since their isolated vertices affect neither the Max-Cut nor the SDP optimum. If the constructed graph is disconnected, choose one local auxiliary vertex from each connected component and identify these vertices into a single vertex. The resulting graph is simple and connected.

Both optimal values are additive under this identification. Indeed, optimal cuts of the components can be complemented independently so that the identified vertices receive the same sign. Likewise, optimal SDP vector assignments can be placed in a common Euclidean space and transformed independently by orthogonal maps so that the identified vertices receive the same unit vector. These operations preserve all component objective values. Conversely, any cut or SDP assignment on the identified graph restricts to a feasible assignment on each original component. Thus the identification preserves both optima, including the SDP value $9m$, and hence preserves exactness. Consequently, recognizing SDP exactness remains NP-hard for connected simple, unweighted graphs.
\end{remark}

Together with Corollary~\ref{cor:np-membership}, this reduction and Remark~\ref{rem:connected} prove NP-completeness for connected simple unweighted graphs. Strong NP-completeness for nonnegative integer weights follows immediately, completing the proof of Theorem~\ref{thm:main}.

\begin{remark}
    Identity~\eqref{eq:exact-gap} concerns the exact additive gap. For an unsatisfiable source instance, it guarantees a gap of at least $1$, but does not establish a gap proportional to $m$. Kun's transformation preserves satisfiability; we do not use or establish preservation of a constant fraction of unsatisfied clauses.
\end{remark}
\begin{remark}
At this point, one may ask if we can use any other clique in this construction. Suppose we keep four variable vertices and add \(d\) private auxiliary vertices to form a clique. With \(d=0\), the four orthonormal variable vectors have a nonzero sum, and there are no auxiliary vectors to cancel it. Thus, the vector assignment used in our reduction does not attain the clique's SDP bound. At the other extreme, with even \(d \ge 4\), the auxiliary vertices can complete any variable assignment to a balanced cut, including an assignment placing all four variables on the same side. They therefore remove the intended constraint. The choice \(d=2\) provides exactly the needed freedom: the auxiliary vectors cancel the sum of the four orthonormal vectors, while the auxiliary signs complete a balanced cut precisely for NAE assignments. Finally, odd clique sizes cannot attain their SDP bounds by a cut, since \(\operatorname{MC}(K_q) = (q^2-1)/4 < q^2/4 = \operatorname{SDP}(K_q)\) for odd \(q \ge 3\).
\end{remark}
\subsection{Hardness with an optimal primal--dual pair supplied}
\begin{corollary}\label{cor:supplied-optimum}
Recognizing Max-Cut SDP exactness remains NP-complete for simple unweighted graphs when the input also includes an exact rational optimal primal--dual pair. The pair produced by the reduction has polynomial encoding length.
\end{corollary}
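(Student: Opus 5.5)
The plan is to augment the reduction of Section~\ref{sec:unweighted} so that it outputs, alongside $G$, an explicit rational primal matrix $X^\star$ and dual vector $y^\star$. I will check directly that they are feasible and have equal objective values, so that the pair is optimal by weak duality. Membership in NP is unchanged, since the extra input can simply be ignored and Corollary~\ref{cor:np-membership} applies. Hardness then follows because the map $\Phi\mapsto(G,X^\star,y^\star)$ is polynomial-time and $G$ is exact if and only if $\Phi$ is satisfiable, as established above. I will work with the (possibly disconnected) graph $G$ built from the $K_6$ blocks, since the corollary only claims simple unweighted graphs.

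\textbf{Primal.} Take $X^\star$ to be the Gram matrix of the vector assignment from the geometric embedding. Variable vertices receive the standard basis vectors $e_1,\dots,e_n$, and both auxiliary vertices of clause $C_j$ receive $-\tfrac12 S_j$, where $S_j=\sum_{k=1}^4 e_{j_k}$. The entries of $X^\star$ are then:
\begin{itemize}[label={}, leftmargin=1em]
\item $0$ between distinct variables;
\item $-\tfrac12$ between a variable and an auxiliary vertex of a clause containing it, and $0$ otherwise;
\item $1$ between the two auxiliary vertices of the same clause;
\item $\tfrac14|C_j\cap C_{j'}|\in\{0,\tfrac14\}$ between auxiliary vertices of distinct clauses $j\neq j'$, using linearity.
\end{itemize}
So $X^\star$ is rational with entries in $\{0,\pm\tfrac14,\pm\tfrac12,1\}$. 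It is PSD with unit diagonal because it is a Gram matrix of unit vectors, and its objective value is $9m$ as computed earlier.

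\textbf{Dual.} Write the objective as $\langle B,X\rangle$ with $B=L/4=\sum_j B_j$, where $B_j=\tfrac14(6I-J)$ is supported on the six vertices of block $j$. The dual of Lemma~\ref{lem:rank-one-certificate} asks for $y$ with $\operatorname{Diag}(y)-B\succeq0$ minimizing $\sum_i y_i$.
\begin{enumerate}[label=\arabic*.]
\item Set $y^\star_v=\tfrac32\,\deg_{\mathcal H}(v)$, where $\deg_{\mathcal H}(v)$ is the number of blocks containing $v$. This gives $\tfrac32$ to each auxiliary vertex and $\tfrac32$ times the occurrence count to each variable vertex.
\item Then $\operatorname{Diag}(y^\star)-B=\sum_j\bigl(\tfrac32 I_j-\tfrac14(6I_j-J_j)\bigr)=\sum_j\tfrac14 J_j$, where $I_j,J_j$ are the identity and all-ones matrices on block $j$ padded by zeros.
\item Each $\tfrac14 J_j$ is PSD, so the sum is PSD.
\item The objective is $\sum_v y^\star_v=\tfrac32\cdot 6m=9m$, which equals the primal value, so $(X^\star,y^\star)$ is an optimal pair.
\end{enumerate}
Since $X^\star$ has $O((n+m)^2)$ entries of constant size and $y^\star$ has entries bounded by $\tfrac32 m$, the pair has polynomial encoding length.

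\textbf{Main obstacle.} The only delicate point is that the supplied pair must be correct and polynomially encoded regardless of whether $\Phi$ is satisfiable, so that it leaks no information. The construction above meets this requirement because both objects depend only on the incidence structure of $\Phi$. If a connected version were desired, the dual would carry over unchanged: an identified vertex simply gets the sum of the $y^\star$ values, and the slack matrix is still a sum of block-supported $\tfrac14 J$ terms. The primal, however, would require rational orthogonal maps aligning the chosen auxiliary vectors, which I would not attempt. This is why the statement restricts to simple graphs without asserting connectedness.
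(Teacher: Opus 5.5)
Your proposal is correct and follows the paper's proof: the primal is the same Gram matrix (standard basis vectors for the variables, $-\tfrac12 S_j$ for both auxiliaries of clause $j$). Your dual $y^\star_v=\tfrac32 r_v$ with slack $\tfrac14\sum_j J_j$ is exactly the paper's $y_i=6r_i$ with slack $M=\sum_j b_jb_j^\top$, rescaled by $1/4$ to match your normalization $B=L/4$. The only nuance is NP membership: ignoring the supplied pair is valid if its validity is a promise, but if validity is part of the language you must also check primal and dual feasibility and equality of objective values in exact rational arithmetic, which takes polynomial time and is what the paper does.
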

\begin{proof}
Use the graph before the identifications in Remark~\ref{rem:connected}. Let $b_j$ indicate the six vertices of block $j$, let $r_i$ count the blocks containing vertex $i$, and set
\[
M=\sum_{j=1}^m b_jb_j^\top.
\]
Edge-disjointness gives $M_{ij}=W_{ij}$ for $i\ne j$, $M_{ii}=r_i$, and $D_{ii}=5r_i$. Hence $y_i=6r_i$ satisfies
\[
\operatorname{Diag}(y)-L=M\succeq0,\qquad
\frac14\sum_i y_i=9m.
\]
The vector assignment in the reduction uses standard basis vectors for the variables and $-\frac12\sum_{i\in C_j}e_i$ for both auxiliaries of clause $j$. Its Gram matrix $X^*$ is rational, has polynomial encoding length, and attains $9m$. Thus $(X^*,y)$ is an exact optimal primal--dual pair, constructible in polynomial time regardless of satisfiability. Exactness still holds precisely when the source formula is satisfiable.

For membership in NP, the supplied pair can first be checked for primal feasibility, dual feasibility, and equality of objective values using exact rational arithmetic. A cut is then verified as in Corollary~\ref{cor:np-membership}. Equivalently, if validity of the supplied pair is treated as a promise, the cut verifier suffices.
\end{proof}
\begin{remark}
For the graph constructed in Corollary~\ref{cor:supplied-optimum}, let $b_j$ indicate the vertices of block $j$. The SDP optimal face is
\[
\mathcal F = \left\{ X\succeq0: \operatorname{diag}(X)=\mathbf1,\; Xb_j=0\ \text{for every }j \right\}.
\]
Indeed, a feasible matrix $X$ is optimal precisely when
\[
\langle M,X\rangle = \sum_j b_j^\top Xb_j =0.
\]
Since $X\succeq0$, each summand is nonnegative and vanishes if and only if $Xb_j=0$.

The construction provides an explicit rational point in $\mathcal F$ of polynomial encoding length. Nevertheless, deciding whether $\mathcal F$ contains a rank-one matrix is NP-complete for this family of instances. Such a matrix necessarily has the form $zz^\top$ with $z\in\{-1,1\}^{V}$, and belongs to $\mathcal F$ precisely when $b_j^\top z=0$ for every block. Thus the hardness persists even when the optimal face has an explicit description and a rational feasible point is supplied.
\end{remark}
The distinction illustrated here---between knowing the continuous optimum and deciding whether a cut attains it---also appears in Laurent and Poljak~\cite{laurent1996positive}.

\section{An Independent Bounded-Weight Construction}
\label{sec:weighted}
Theorem~\ref{thm:main} already implies strong NP-completeness of recognizing Max-Cut SDP exactness for nonnegative integer weights. We give a second proof that does not use the linear-hypergraph restriction or Kun's Sparse Incomparability Theorem \cite[Theorem~1]{kun2013constraints}. Its purpose is to isolate the local sum-of-squares mechanism underlying the hardness construction: each clause contributes a rank-one positive semidefinite matrix, and a private auxiliary vector makes its penalty vanish in the SDP. For sign assignments, simultaneous vanishing is equivalent to satisfying the source formula. Summing these local matrices produces nonnegative integer edge weights of polynomial magnitude. Thus the construction establishes strong NP-completeness directly, while separating the mechanism that enforces exactness from the additional structure needed to obtain simple unweighted graphs.

\subsection{The Base Combinatorial Problem}
Let \textnormal{\scshape Monotone $\{3,6\}$-in-9 SAT} be the Boolean satisfiability problem in which each clause contains exactly nine distinct, unnegated variables, and a clause is satisfied if and only if exactly three or exactly six of its variables are assigned true. Under the transformation $z_i = 2x_i - 1$ from Boolean variables to signs in $\{-1,1\}$, a clause $C_j$ is satisfied if and only if
\[
    \sum_{i \in C_j} z_i \in \{-3,3\}.
\]

\begin{lemma}
\textnormal{\scshape Monotone $\{3,6\}$-in-9 SAT} is NP-complete.
\end{lemma}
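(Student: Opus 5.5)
The plan is a direct reduction from \textnormal{\scshape Monotone NAE-3-SAT} with three distinct variables per clause. That problem is NP-complete by Schaefer's theorem~\cite{schaefer1978complexity}, by the same closure checks used in Lemma~\ref{lem:linear-nae4}. A repeated variable inside a $3$-clause can be removed by hand: $\mathrm{NAE}(x,x,y)$ is just $x\neq y$, and $\mathrm{NAE}(x,x,x)$ makes the instance trivially unsatisfiable. Membership in NP is immediate by checking a proposed assignment clause by clause.

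For hardness, the idea is triplication. Each source variable $x$ is replaced by three copies per occurrence. An NAE clause on $a,b,c$ becomes a single $9$-clause on three copies of each literal: $a,a',a'',b,b',b'',c,c',c''$. If all copies of a variable agree, the number of true variables in this clause is $3w$, where $w\in\{0,1,2,3\}$ is the number of true variables among $a,b,c$. Then $3w\in\{3,6\}$ holds exactly when $w\in\{1,2\}$, which is exactly the NAE condition. All nine variables in each such clause are distinct fresh copies, so the distinctness requirement holds.

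The main obstacle is forcing all copies of the same source variable to be equal, using only $\{3,6\}$-in-$9$ clauses on distinct variables. For this I would use the following equality gadget for two variables $x,y$. Introduce eight private fresh variables $P=\{p_1,\dots,p_8\}$ and the two clauses $\{x\}\cup P$ and $\{y\}\cup P$.
\begin{itemize}[label={}, leftmargin=0pt]
\item \emph{Soundness.} Let $k$ be the number of true variables in $P$. The two clauses require $k+x\in\{3,6\}$ and $k+y\in\{3,6\}$. Two elements of $\{3,6\}$ differ by $0$ or $3$, while $|x-y|\le1$, so $x=y$.
\item \emph{Completeness.} If $x=y$, choose $k=3-x$; this is $3$ or $2$, both at most $8$. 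Both clauses are then satisfied.
\end{itemize}
Chaining such gadgets along all copies of each source variable forces them to be equal. In the other direction, any satisfying assignment of the source formula extends to the copies and to the gadget variables. Hence the constructed instance is satisfiable if and only if the NAE-3-SAT instance is.

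The construction is clearly polynomial, and every clause has nine distinct unnegated variables. Once the gadget checks above are recorded, the only remaining bookkeeping is the following. Each clause of the source instance gets its own fresh copies, and each gadget gets its own private block $P$, so no clause ever repeats a variable.
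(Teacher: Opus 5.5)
Your proof is correct. Its second half is the paper's argument: the paper uses the same equality gadget, two clauses sharing eight private auxiliaries, and replaces every argument occurrence by a fresh variable tied to the original.

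The difference is how hardness of the $9$-ary relation $R=\{x\in\{0,1\}^9:\sum_i x_i\in\{3,6\}\}$ is obtained in the first place.
\begin{itemize}
\item The paper applies Schaefer's theorem directly to $R$. It checks that $R$ is not $0$-valid, not $1$-valid, and not Horn, dual-Horn or bijunctive, using explicit tuples. It shows $R$ is not affine since $|R|=168$ is not a power of two. This makes $\operatorname{CSP}(R)$ with repeated variables hard, and the gadget then removes the repetitions.
\item You instead use, in effect, the identity $R(a,a,a,b,b,b,c,c,c)=\mathrm{NAE}(a,b,c)$. Hardness is then inherited from NAE-3-SAT by triplication, and Schaefer's conditions never have to be checked for the $9$-ary relation.
\end{itemize}
Your route is more elementary and explains where the thresholds $3$ and $6$ come from. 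The paper's route is more uniform with its treatment of NAE-4-SAT in Lemma~\ref{lem:linear-nae4}.

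One step of yours is underjustified: the remark that repeated variables in the NAE-3 source ``can be removed by hand.'' The constraint $x\neq y$ is not itself a monotone NAE-3 clause on distinct variables. It can be simulated by $\mathrm{NAE}(x,y,z_i)$ for $i=1,2,3$ together with $\mathrm{NAE}(z_1,z_2,z_3)$, but you do not say this.

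In fact you do not need that step. Because you triplicate per occurrence, a source clause with repetitions already becomes a clause on nine distinct fresh copies. Its true-count is still three times the number of true occurrences: $6x+3y$ for $\mathrm{NAE}(x,x,y)$, and $0$ or $9$ for $\mathrm{NAE}(x,x,x)$. So you can start directly from NAE-3-SAT with repetitions allowed, which is exactly what Schaefer's theorem provides.
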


\begin{proof}
Define the Boolean relation
\[
    R = \left\{
        (x_1,\ldots,x_9) \in \{0,1\}^9 :
        \sum_{i=1}^{9} x_i \in \{3,6\}
    \right\}.
\]
We first allow variables to repeat within a constraint. By Schaefer's Dichotomy Theorem~\cite{schaefer1978complexity}, $\operatorname{CSP}(R)$ is NP-complete unless $R$ is 0-valid, 1-valid, Horn, dual-Horn, bijunctive, or affine. We exclude these possibilities:
\begin{itemize}
    \item \textit{Not 0-valid or 1-valid :}
    Neither the all-zero nor the all-one assignment belongs to $R$.

    \item \textit{Not bijunctive:}
    The three assignments with supports $\{1,2,3\}$, $\{4,5,6\}$, and $\{7,8,9\}$ belong to $R$. Their coordinatewise majority is the all-zero assignment, which does not belong to $R$. Thus $R$ is not closed
    under coordinatewise majority.

    \item \textit{Not Horn:}
    Two assignments with disjoint supports of size three belong to $R$, but their coordinatewise conjunction
    is the all-zero assignment. Thus $R$ is not closed under coordinatewise conjunction.

    \item \textit{Not dual-Horn:}
    The complements of those two assignments each have support of size six and hence belong to $R$.
    Their coordinatewise disjunction is the all-one assignment. Thus $R$ is not closed under
    coordinatewise disjunction.

    \item \textit{Not affine:}
    The relation has cardinality
    \[
        |R| = \binom{9}{3} + \binom{9}{6} = 168.
    \]
    Since a nonempty affine subspace over $\mathbb{F}_2$ has cardinality a power of two, $R$ is not affine.
\end{itemize}
It follows that $\operatorname{CSP}(R)$ is NP-complete.

We next show that requiring nine distinct variables in every constraint preserves NP-hardness. For distinct variables $x,y$, introduce eight fresh auxiliary variables $a_1,\ldots,a_8$ and impose
\[
    R(x,a_1,\ldots,a_8)
    \quad\text{and}\quad
    R(y,a_1,\ldots,a_8).
\]
This gadget enforces $x=y$. Indeed, writing $t=\sum_{i=1}^{8}a_i$, its constraints require
\[
    t+x \in \{3,6\}
    \quad\text{and}\quad
    t+y \in \{3,6\}.
\]
If $x \ne y$, these two quantities differ by one, which is impossible. Conversely, either assignment with $x=y$ extends to a satisfying assignment of the gadget: choose $t=3$ when $x=y=0$, and $t=2$ when $x=y=1$.

Given an arbitrary instance of $\operatorname{CSP}(R)$, replace each argument occurrence in each constraint by a fresh variable, and enforce equality between that variable and its original variable using the gadget above. Use fresh auxiliary variables for each gadget. Every resulting constraint contains nine distinct,
unnegated variables. The construction preserves satisfiability and increases the instance size only linearly.
Thus the distinct-variable restriction is NP-hard.

Finally, a Boolean assignment can be checked against all clauses in polynomial time, so the problem belongs to NP.
\end{proof}

\subsection{The Matrix Construction and Its Gap}

Let $\Phi$ be a formula with $n$ variables and $m\ge1$ clauses. We construct a weighted graph with one vertex for each variable and one private auxiliary vertex $y_j$ for each clause $C_j$. The purpose of the auxiliary vertex is to turn the clause condition $\sum_{i\in C_j}z_i\in\{-3,3\}$ into a zero-sum condition:
\[
\sum_{i\in C_j}z_i+3z_{n+j}=0,
\]
where $z_{n+j}\in\{-1,1\}$ is the sign assigned to $y_j$.

Encode this expression by a vector $a_j\in\mathbb Z^{n+m}$, with $(a_j)_i=1$ for $i\in C_j$, $(a_j)_{n+j}=3$, and all other entries zero. Set
\[
M=\sum_{j=1}^m a_ja_j^\top, \qquad W_{uv}=M_{uv}\quad(u\ne v), \qquad W_{uu}=0.
\]
The off-diagonal entries define the edge weights of a graph $G$ on $n+m$ vertices. Variable--variable weights count shared clauses, variable--auxiliary weights are $3$ within the corresponding clause, and auxiliary--auxiliary weights vanish. In particular, all weights are nonnegative integers bounded by $3m$. Clauses may share several variables: their contributions to a common edge are simply added.

For any feasible Gram matrix $X$, the Max-Cut SDP objective satisfies
\begin{equation}\label{eq:weighted-sos}
\begin{aligned}
\frac14\sum_{u\ne v}W_{uv}(1-X_{uv}) &= \frac14\bigl( \mathbf1^\top M\mathbf1-\langle M,X\rangle \bigr)\\
&= 36m-\frac14\langle M,X\rangle.
\end{aligned}
\end{equation}
The first equality holds because $X_{uu}=1$, so the diagonal terms cancel. For the second, each $a_j$ has coordinate sum $12$, and hence
\[
\mathbf1^\top M\mathbf1 = \sum_{j=1}^m(\mathbf1^\top a_j)^2 = 144m.
\]
Since $M,X\succeq0$, we have $\langle M,X\rangle\ge0$, giving the upper bound $\operatorname{SDP}(G)\le36m$.

We next construct a vector assignment attaining this bound for every source formula. Assign the variable vertices orthonormal vectors $v_1,\ldots,v_n$ and assign
\[
u_j=-\frac13\sum_{i\in C_j}v_i
\]
to the auxiliary vertex $y_j$. Each clause contains nine distinct variables, so
\[
\|u_j\|^2=\frac19\left\|\sum_{i\in C_j}v_i\right\|^2=1.
\]
Thus all assigned vectors are unit vectors. Moreover, the resulting Gram matrix satisfies
\[
\langle M,X\rangle = \sum_{j=1}^m a_j^\top Xa_j = \sum_{j=1}^m \left\|\sum_{i\in C_j}v_i+3u_j\right\|^2 = 0.
\]
Consequently,
\[
\operatorname{SDP}(G)=36m.
\]

For a cut vector $z\in\{-1,1\}^{n+m}$, the same matrix expression becomes a sum of scalar squares:
\[
\langle M,zz^\top\rangle = \sum_{j=1}^m \left(\sum_{i\in C_j}z_i+3z_{n+j}\right)^2.
\]
Evaluating~\eqref{eq:weighted-sos} on cut matrices therefore gives the exact gap identity
\begin{equation}\label{eq:weighted-gap}
\operatorname{SDP}(G)-\operatorname{MC}(G) = \frac14 \min_{z\in\{-1,1\}^{n+m}} \sum_{j=1}^m \left(\sum_{i\in C_j}z_i+3z_{n+j}\right)^2.
\end{equation}

If a cut attains the SDP bound, every square must vanish. For each clause, this forces $\sum_{i\in C_j}z_i\in\{-3,3\}$, so the variable signs give a satisfying assignment of $\Phi$. Conversely, a satisfying assignment has each clause sum equal to $3$ or $-3$. Its private auxiliary sign can then be chosen to cancel that sum. These choices can be made independently for all clauses, giving a cut of value $36m$.

Thus $G$ is exact if and only if $\Phi$ is satisfiable. The construction has polynomial size and produces nonnegative integer weights of polynomial magnitude, proving strong NP-hardness independently of the unweighted reduction. Together with Corollary~\ref{cor:np-membership}, this gives an independent proof of strong NP-completeness.

\begin{remark}
This construction fits the Gram-representation framework of Delorme and Poljak~\cite{DP93_II}. Associate with vertex $u$ the feature vector
\[
f_u=((a_1)_u,\ldots,(a_m)_u).
\]
Then $W_{uv}=\langle f_u,f_v\rangle$ for $u\ne v$. The upper bound in~\eqref{eq:weighted-sos} is
\[
\frac14\left\|\sum_u f_u\right\|^2=36m,
\]
and the vector assignment above certifies its attainment by the SDP.
\end{remark}

\section{Extensions to Other Partitioning Problems}
\label{sec:exactness-applications}
We first extend the clause construction to Max-$k$-Cut, then give reductions for Max-DiCut and Max-Bisection. Exactness throughout refers to the particular SDP formulation displayed in each subsection. Table~\ref{tab:exactness-applications} summarizes the results.
\begin{table}[htbp]
\centering\small
\begin{tabular}{@{}>{\raggedright\arraybackslash}p{.16\textwidth}>{\raggedright\arraybackslash}p{.52\textwidth}>{\raggedright\arraybackslash}p{.25\textwidth}@{}}
\toprule
Problem & Restricted instances & Exactness recognition\\
\midrule
Max-$k$-Cut & Connected; nonnegative integer weights; fixed $k\ge3$ & Strongly NP-hard\\[3mm]
Max-DiCut & Strongly connected; positive integer arc weights; equal weighted indegree and outdegree at every vertex; unequal opposite arc weights & Strongly NP-complete for~\eqref{eq:dicut-sdp}\\[1.2cm]
Max-Bisection & Connected, simple, unweighted & NP-hard\\
\bottomrule\\
\end{tabular}
\caption{Results for the SDP formulations in this section.}
\label{tab:exactness-applications}
\end{table}

The NP-completeness statements include polynomial-size certificates of exactness. For the basic Max-DiCut relaxation,
such a certificate follows from Lemma~\ref{lem:rank-one-certificate}. For the Max-$k$-Cut and Max-Bisection formulations, we establish NP-hardness only; the additional constraints prevent a direct application of that lemma, and we do not establish NP membership here.

\subsection{Max-\texorpdfstring{$k$}{k}-Cut}
\label{subsec:max-k-cut-exactness}
We extend the clause construction to Max-$k$-Cut for every fixed $k\ge3$. View a partition into at most $k$ parts as an assignment of $k$ available colors to the vertices. To encode the Boolean constraints, we introduce $k-2$ anchor vertices shared by all clauses. The edge weights ensure that any partition attaining the SDP upper bound assigns distinct colors to the anchors and uses only the two remaining colors on the variable and auxiliary vertices. We then construct a feasible vector assignment attaining this bound in the Frieze--Jerrum relaxation.

For a graph $G$ with nonnegative edge weights $W$, let $\operatorname{MC}_k(G)$ denote the maximum weight of edges joining different parts of a partition into at most $k$ parts. The Frieze--Jerrum SDP relaxation \cite{FriezeJerrum1997} is
\begin{equation}
\label{eq:fj-sdp-exactness}
\begin{aligned}
\operatorname{SDP}_k(G) ={}&\max_X\ \frac{k-1}{k}\sum_{u<v}W_{uv}(1-X_{uv})\\
\text{subject to }&X\succeq0,\qquad X_{uu}=1\quad(u\in V(G)),\\
&X_{uv}\ge-\frac1{k-1}\quad(u\ne v).
\end{aligned}
\end{equation}
A discrete partition is represented by assigning its parts the unit vectors of a regular simplex with mutual inner products $-1/(k-1)$. For such an assignment, the SDP objective equals the partition weight. Exactness means $\operatorname{SDP}_k(G)=\operatorname{MC}_k(G)$.

\begin{proof}[Proof of Theorem~\ref{thm:kcut-main}]
Let $\Phi$ be a linear Monotone NAE-4-SAT instance with $m\ge1$ clauses $C_1, C_2, \allowbreak \ldots,C_m$, using the NP-complete restriction in Lemma~\ref{lem:linear-nae4}. Discard variables occurring in no clause. For each clause $C_j$, introduce two private auxiliary vertices $d_{j,1},d_{j,2}$ and set
\[
B_j=C_j\cup\{d_{j,1},d_{j,2}\}.
\]
Let $V_0$ consist of the variable and auxiliary vertices. Add $k-2$ anchor vertices $a_1,\ldots,a_{k-2}$, shared by all clauses, and write $V=V_0\cup\{a_1,\ldots,a_{k-2}\}$.

For each clause define $b_j\in\mathbb{Z}^{V}$ by
\[
(b_j)_v=
\begin{cases}
1,&v\in B_j,\\
3,&v\in\{a_1,\ldots,a_{k-2}\},\\
0,&\text{otherwise}.
\end{cases}
\]
Construct
\[
M=\sum_{j=1}^m b_jb_j^\top, \qquad W_{uv}=M_{uv}\quad(u\ne v), \qquad W_{uu}=0.
\]
These weights define a graph $G$ on $V$. Each $B_j$ induces a $K_6$. Since $\Phi$ is linear and each auxiliary vertex belongs to only one block, an edge within $V_0$ has weight $1$ whenever it is present. The weight between an anchor and $v\in V_0$ is three times the number of blocks containing $v$, and the weight between two anchors is $9m$. Thus every edge weight is a positive integer bounded by $9m$. Every vertex in $V_0$ is adjacent to every anchor, so $G$ is connected.

\smallskip
\noindent\emph{The SDP upper bound.}
For a feasible matrix $X$ in \eqref{eq:fj-sdp-exactness}, let $F_k(X)$ denote its objective value. Using $X_{uu}=1$ gives
\begin{align*}
F_k(X) &=\frac{k-1}{2k} \left(\sum_{u\ne v}M_{uv}-\sum_{u\ne v}M_{uv}X_{uv}\right)\\
&=\frac{k-1}{2k} \left(\mathbf1^\top M\mathbf1-\langle M,X\rangle\right).
\end{align*}
Since $\mathbf1^\top b_j=6+3(k-2)=3k$, it follows that
\begin{equation}
\label{eq:fj-sos-upper-bound}
F_k(X)=U-\frac{k-1}{2k}\langle M,X\rangle, \qquad U=\frac92 k(k-1)m.
\end{equation}
Both $M$ and $X$ are positive semidefinite, so $F_k(X)\le U$.

\smallskip
\noindent\emph{An optimal vector assignment.}
First assign mutually orthogonal unit vectors $u_v$ to the variable vertices. For each clause set
\[
u_{d_{j,1}}=u_{d_{j,2}} =-\frac12\sum_{v\in C_j}u_v.
\]
The four variables in a clause are distinct, so these auxiliary vectors have unit norm. Moreover,
\begin{equation}
\label{eq:fj-original-block-balance}
\sum_{v\in B_j}u_v=0\qquad(j=1,\ldots,m).
\end{equation}

Choose unit vectors $q_1,\ldots,q_{k-2}$ with $\langle q_r,q_s\rangle=-1/(k-1)$ for $r\ne s$, for example a subset of the vertices of a regular simplex with $k$ vertices. Choose their span orthogonal to all the vectors $u_v$. We modify the vectors on $V_0$ by a common translation and scaling. The translation will cancel the contribution of the anchors in every clause, while the scaling will
restore unit norms. Set
\[
Q=\sum_{r=1}^{k-2}q_r, \qquad c=-\frac12Q, \qquad \alpha=\sqrt{\frac{k}{2(k-1)}}.
\]
Assign $w_v=c+\alpha u_v$ to each $v\in V_0$ and assign $q_r$ to anchor $a_r$. Direct calculation gives
\[
\|Q\|^2=\frac{2(k-2)}{k-1}, \qquad \|c\|^2=\frac{k-2}{2(k-1)}, \qquad \langle c,q_r\rangle=-\frac1{k-1}.
\]
Hence each $w_v$ is a unit vector and $\langle w_v,q_r\rangle=-1/(k-1)$. For $v,w\in V_0$,
\begin{equation}
\label{eq:fj-affine-embedding}
\langle w_v,w_w\rangle =\frac{k-2}{2(k-1)} +\frac{k}{2(k-1)}\langle u_v,u_w\rangle \ge-\frac1{k-1}.
\end{equation}
The anchor pairs also satisfy the required lower bound. The resulting Gram matrix $X$ is therefore feasible for \eqref{eq:fj-sdp-exactness}.

For every clause, \eqref{eq:fj-original-block-balance} implies
\[
\sum_{v\in B_j}w_v+3\sum_{r=1}^{k-2}q_r =6c+\alpha\sum_{v\in B_j}u_v+3Q=0.
\]
Consequently,
\[
\langle M,X\rangle =\sum_{j=1}^m \left\|\sum_{v\in B_j}w_v+3\sum_{r=1}^{k-2}q_r\right\|^2 =0.
\]
Together with \eqref{eq:fj-sos-upper-bound}, this proves
\begin{equation}
\label{eq:fj-known-optimum}
\operatorname{SDP}_k(G)=U=\frac92 k(k-1)m.
\end{equation}

\smallskip
\noindent\emph{Discrete attainment of the bound.}
Let $\sigma:V\to\{1,\ldots,k\}$ be a coloring representing a partition, and write $\operatorname{cut}_k(G,\sigma)$ for the total weight of edges whose endpoints receive different colors. Let $p_1,\ldots,p_k$ be unit regular-simplex vectors with mutual inner products $-1/(k-1)$. Define the coefficient load of color $\ell$ in clause $j$ by
\[
L_{j,\ell}(\sigma) =\sum_{\substack{v\in V\\\sigma(v)=\ell}}(b_j)_v.
\]
Thus $L_{j,\ell}(\sigma)$ counts the vertices of $B_j$ assigned color $\ell$, together with a contribution of $3$ from each anchor assigned that color. The loads sum to $6+3(k-2)=3k$. The simplex inner products therefore give
\begin{align*}
\left\|\sum_{\ell=1}^kL_{j,\ell}(\sigma)p_\ell\right\|^2 &=\frac{k}{k-1}\sum_{\ell=1}^kL_{j,\ell}(\sigma)^2 -\frac1{k-1}(3k)^2\\
&=\frac{k}{k-1}\sum_{\ell=1}^k \bigl(L_{j,\ell}(\sigma)-3\bigr)^2.
\end{align*}
Evaluating \eqref{eq:fj-sos-upper-bound} at the Gram matrix of this partition yields
\begin{equation}
\label{eq:fj-discrete-penalty}
\operatorname{cut}_k(G,\sigma) =U-\frac12\sum_{j=1}^m\sum_{\ell=1}^k \bigl(L_{j,\ell}(\sigma)-3\bigr)^2.
\end{equation}
Thus a partition attains $U$ if and only if every color has load $3$ in every clause.

Each anchor contributes $3$ to the load of its color in every clause. Accordingly, no two anchors can have the same color in a partition attaining $U$, since that color would have load at least $6$. Furthermore, no vertex of $V_0$ can use an anchor color: it belongs to some block and would increase the corresponding load above $3$. Only two colors remain for $V_0$, and each six-vertex block must contain three vertices of each color. Figure~\ref{fig:kcut-clause-loads} illustrates this load condition for $k=4$ and a single clause.

\begin{figure}[htbp]
\centering
\begin{tikzpicture}[
    vertex/.style={
        circle,
        draw=black,
        fill=black!20,
        minimum size=8mm,
        inner sep=1pt,
        font=\small
    },
    auxiliary/.style={
        rectangle,
        draw=black,
        fill=white,
        minimum width=10mm,
        minimum height=7mm,
        inner sep=2pt,
        font=\small
    },
    anchor vertex/.style={
        circle,
        draw=black,
        double,
        double distance=1pt,
        fill=white,
        minimum size=8mm,
        inner sep=1pt,
        font=\small
    },
    partition class/.style={
        rounded corners=3pt,
        draw=black!55,
        line width=0.6pt
    }
]

% Four boxes represent the four partition classes.
\foreach \pos/\c in {-3.6/1,-1.2/2,1.2/3,3.6/4} {
    \begin{scope}[xshift=\pos cm]
        \draw[partition class] (-1,-1.65) rectangle (1,1.65);
        \node[font=\small] at (0,2.05) {Color \c};
        \node[font=\small] at (0,-2.05)
            {$L_{j,\c}=3$};
    \end{scope}
}

% Each anchor contributes coefficient 3 to every clause.
\node[anchor vertex] at (-3.6,0.25) {$a_1$};
\node[font=\small,align=center] at (-3.6,-0.65)
    {coefficient $3$};

\node[anchor vertex] at (-1.2,0.25) {$a_2$};
\node[font=\small,align=center] at (-1.2,-0.65)
    {coefficient $3$};

% An illustrative balanced split of the six vertices in B_j.
% Each of these vertices has coefficient 1 in b_j.
\node[vertex]  at (1.2, 1.05) {$x_{j_1}$};
\node[vertex]  at (1.2, 0)    {$x_{j_2}$};
\node[auxiliary] at (1.2,-1.05) {$d_{j,1}$};

\node[vertex]  at (3.6, 1.05) {$x_{j_3}$};
\node[vertex]  at (3.6, 0)    {$x_{j_4}$};
\node[auxiliary] at (3.6,-1.05) {$d_{j,2}$};

\end{tikzpicture}
\caption{\small The clause-load condition for $k=4$.
The boxes represent the four parts of a partition
attaining the SDP bound. Each of the two shared
anchors contributes $3$ to its color's load in every
clause, leaving no capacity for a block vertex in
either anchor color. The six vertices of
$B_j=C_j\cup\{d_{j,1},d_{j,2}\}$ each contribute $1$
and must split equally between the two remaining
colors. The figure shows one such split.
Only vertices with nonzero coefficient in $b_j$
are shown; vertices from other blocks may also
occupy the last two parts.}
\label{fig:kcut-clause-loads}
\end{figure}

For four variable vertices and two private auxiliary vertices, such a three-versus-three split exists precisely when the four variables are not monochromatic. Indeed, if the variables use one color $t$ times, then the auxiliary vertices can complete the split exactly when $t\in\{1,2,3\}$. Hence a partition attaining $U$ induces a satisfying assignment of $\Phi$. Conversely, a satisfying assignment of $\Phi$ extends to a partition attaining $U$: use its two Boolean values as the two non-anchor colors, assign distinct remaining colors to the anchors, and balance each block using its private auxiliary vertices.

It follows that
\[
\operatorname{MC}_k(G)=\operatorname{SDP}_k(G) \quad\Longleftrightarrow\quad \Phi\text{ is satisfiable}.
\]
For fixed $k$, the construction has polynomial size and all weights are bounded by $9m$. This proves strong NP-hardness.
\end{proof}

Combining \eqref{eq:fj-known-optimum} and \eqref{eq:fj-discrete-penalty} gives the exact additive integrality gap:
\begin{equation}
\label{eq:fj-exact-gap}
\operatorname{SDP}_k(G)-\operatorname{MC}_k(G) =\frac12\min_{\sigma:V\to\{1,\ldots,k\}} \sum_{j=1}^m\sum_{\ell=1}^k \bigl(L_{j,\ell}(\sigma)-3\bigr)^2.
\end{equation}
The continuous assignment balances every clause regardless of satisfiability, while discrete balance forces the anchors to occupy $k-2$ distinct colors and recovers the original Boolean constraints. Thus the reduction preserves the central feature of the earlier constructions: the SDP optimum and an optimal vector assignment are explicit, but recognizing whether a discrete partition attains that optimum is strongly NP-hard.
\begin{remark}
    The load identity \eqref{eq:fj-discrete-penalty} is a specialization of the feature-balancing formulation of Bhardwaj, Gogoi, and Narayanan~\cite{BGN26}. In the present construction, each vertex \(v\) has feature vector \(((b_1)_v,\ldots,(b_m)_v)\), and attaining \(U\) requires equal aggregate feature vectors in all \(k\) parts. The shared anchors encode the Boolean constraints within this balancing condition, while the explicit vector assignment attains the SDP bound independently of satisfiability.
\end{remark}

\subsection{Max-DiCut}
\label{subsec:dicut-exactness}
Let $D$ be a digraph with nonnegative arc weights $W_{ij}$ and no loops. For a sign vector $x\in\{-1,1\}^{V}$, the directed cut from the positive part to the negative part has weight
\[
\operatorname{dcut}_D(x) =\frac14\sum_{i\ne j}W_{ij}(1+x_i-x_j-x_ix_j).
\]
We denote its maximum by $\operatorname{OPT}_{\mathrm{di}}(D)$. To express the linear terms in $x$ through a Gram matrix, introduce an additional index $0$ representing the fixed sign $x_0=1$. Then $x_i=x_ix_0$, so the products in the objective can be replaced by entries of a correlation matrix. This gives the SDP relaxation
\begin{equation}
\label{eq:dicut-sdp}
\begin{aligned}
\operatorname{SDP}_{\mathrm{di}}(D) = {}&\max_X\, \frac14\sum_{i\ne j}W_{ij} (1+X_{i0}-X_{j0}-X_{ij})\\
\text{subject to }&X\succeq0,\qquad X_{uu}=1\quad(u\in V\cup\{0\}).
\end{aligned}
\end{equation}

The discrete assignment $x$ corresponds to $X=(1,x)(1,x)^\top$.

Replacing each undirected edge by two opposite arcs, each carrying the original edge weight, preserves both the maximum cut value and the SDP optimum. To obtain unequal opposite arc weights, we perturb this symmetric assignment. The perturbation adds weight in one direction and subtracts it in the other, while preserving equal weighted indegree and outdegree at every vertex. The following lemma shows that such a perturbation leaves both objective values unchanged, apart from a common scaling factor.

\begin{lemma}
\label{lem:circulation-invariance}
Let $G$ have a symmetric nonnegative weight matrix $A$ with zero diagonal. Suppose $J^\top=-J$, $J\mathbf1=0$, and $C>0$ is such that $W=CA+J$ is entrywise nonnegative. Let $D$ have arc-weight matrix $W$. Then
\begin{align}
\operatorname{SDP}_{\mathrm{di}}(D) &=C\operatorname{SDP}(G), \label{eq:dicut-sdp-transfer}\\
\operatorname{OPT}_{\mathrm{di}}(D) &=C\operatorname{MC}(G). \label{eq:dicut-integer-transfer}
\end{align}
\end{lemma}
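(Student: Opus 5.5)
The plan is to show that, for every feasible $X$ in \eqref{eq:dicut-sdp}, the Max-DiCut objective for $W=CA+J$ equals $C$ times the Max-Cut SDP objective for $A$ evaluated at the principal submatrix of $X$ indexed by $V$. Both identities then follow from the correspondence between feasible sets.

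\textbf{Step 1: decomposition.} Since the dicut objective is linear in $W$, we split
\[
\tfrac14\sum_{i\ne j}W_{ij}(1+X_{i0}-X_{j0}-X_{ij})
= C\cdot\tfrac14\sum_{i\ne j}A_{ij}(1+X_{i0}-X_{j0}-X_{ij})
+\tfrac14\sum_{i\ne j}J_{ij}(1+X_{i0}-X_{j0}-X_{ij}).
\]

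\textbf{Step 2: the symmetric part.} Because $A$ is symmetric and $\sum_{i\ne j}A_{ij}(X_{i0}-X_{j0})$ is antisymmetric under swapping $i$ and $j$, the linear terms cancel. What remains is $C\cdot\frac14\sum_{i\ne j}A_{ij}(1-X_{ij})$, which is $C$ times the standard Max-Cut SDP objective.

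\textbf{Step 3: the antisymmetric part.} Since $J$ is antisymmetric, $\sum_{i\ne j}J_{ij}=0$. Because $X$ is symmetric, $\sum_{i\ne j}J_{ij}X_{ij}=0$. For the linear terms we have
\[
\sum_{i\ne j}J_{ij}X_{i0}=\sum_i X_{i0}(J\mathbf1)_i=0,
\]
using $J_{ii}=0$, which follows from antisymmetry. Similarly, $\sum_{i\ne j}J_{ij}X_{j0}=\sum_j X_{j0}(J^\top\mathbf1)_j=-\sum_j X_{j0}(J\mathbf1)_j=0$. So the $J$ contribution vanishes identically.

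\textbf{Step 4: transfer of optima.} From Steps 2 and 3, the dicut SDP objective at $X$ equals $C$ times the Max-Cut SDP objective at $X_{VV}$. Every feasible $X$ restricts to a feasible $X_{VV}$. Conversely, every feasible Max-Cut matrix $Y$ extends to a feasible $X$, for instance by taking $X=\begin{pmatrix}1&0\\0&Y\end{pmatrix}$, or equivalently by choosing a unit vector $v_0$ orthogonal to all the $v_i$. This gives \eqref{eq:dicut-sdp-transfer}.

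For \eqref{eq:dicut-integer-transfer}, we specialize to rank-one matrices $X=(1,x)(1,x)^\top$. These restrict to $xx^\top$, and every $x\in\{-1,1\}^V$ arises this way, so the maxima agree up to the factor $C$.

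\textbf{Main obstacle.} The only point requiring care is the cancellation of the linear terms in Step 3. It is exactly here that the hypothesis $J\mathbf1=0$ (equal weighted indegree and outdegree) is used. The nonnegativity assumption on $W$ is needed only so that $D$ is a legitimate weighted digraph; it plays no role in the identities themselves.
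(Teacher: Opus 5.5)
Your proof is correct and follows essentially the same route as the paper. You cancel the anchor terms, kill the $J$ contributions by antisymmetry against the symmetric $X$, and transfer optima via restriction to $X_V$ and the extension $\begin{pmatrix}1&0\\0&Y\end{pmatrix}$; the only cosmetic difference is that the paper cancels the linear terms for $W$ in one step using $W\mathbf1=W^\top\mathbf1$, whereas you split them into the $A$ and $J$ parts.
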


\begin{proof}
Symmetry of $A$, skew-symmetry of $J$, and $J\mathbf1=0$ imply $W\mathbf1=W^\top\mathbf1$. Thus the weighted outdegree and indegree agree at every vertex, and the anchor terms cancel:
\[
\sum_{i\ne j}W_{ij}(X_{i0}-X_{j0}) =\sum_i X_{i0}\sum_{j\ne i}(W_{ij}-W_{ji})=0.
\]
Moreover, $\sum_{i\ne j}J_{ij}=0$ and $\sum_{i\ne j}J_{ij}X_{ij}=0$ because $X$ is symmetric. Hence the objective in \eqref{eq:dicut-sdp} equals
\[
\frac C4\sum_{i\ne j}A_{ij}(1-X_{ij}).
\]
The principal submatrix $X_V$ is feasible for the Max-Cut SDP on $G$. Conversely, every feasible Max-Cut Gram matrix $Y$ extends to a feasible anchored matrix $\begin{pmatrix}1&0\\0&Y\end{pmatrix}$, obtained by choosing the anchor orthogonal to all variable vectors. This proves \eqref{eq:dicut-sdp-transfer}. Evaluating the same identity at $X=(1,x)(1,x)^\top$ and maximizing over $x$ proves \eqref{eq:dicut-integer-transfer}.
\end{proof}

\begin{theorem}
\label{thm:dicut-exactness-hardness}
Recognizing exactness of Max-DiCut SDP relaxation \eqref{eq:dicut-sdp} is strongly NP-complete even for strongly connected digraphs with nonnegative integer arc weights and equal weighted indegree and outdegree at every vertex such that, for every adjacent pair $i,j$, both $W_{ij}$ and $W_{ji}$ are positive and $W_{ij}\ne W_{ji}$.
\end{theorem}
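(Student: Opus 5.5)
The plan has three parts: reduce from Max-Cut exactness on connected simple unweighted graphs (Theorem~\ref{thm:main}) using Lemma~\ref{lem:circulation-invariance}, build a suitable circulation $J$, and check NP membership.

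\textbf{Construction.} Start from a connected simple unweighted graph $G$ produced by the reduction of Section~\ref{sec:unweighted}, with adjacency matrix $A$. Choose a skew-symmetric $J$ with $J\mathbf1=0$, supported on the edges of $G$, with $J_{ij}\neq0$ for every edge. Set $W=CA+J$ with an integer $C$ exceeding $\max|J_{ij}|$.

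Then every adjacent pair has $W_{ij}=C+J_{ij}>0$ and $W_{ji}=C-J_{ij}>0$. These are unequal because $J_{ij}\neq0$. Since $W\mathbf1=W^\top\mathbf1$, weighted indegree equals outdegree at every vertex. Strong connectivity follows because $G$ is connected and both arcs of every edge carry positive weight. Lemma~\ref{lem:circulation-invariance} gives $\operatorname{SDP}_{\mathrm{di}}(D)=C\operatorname{SDP}(G)$ and $\operatorname{OPT}_{\mathrm{di}}(D)=C\operatorname{MC}(G)$. Hence $D$ is exact if and only if $G$ is, and the additive gap scales by $C$.

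\textbf{Choosing $J$ (the main obstacle).} We need a circulation that is nonzero on every edge and has polynomially bounded integer values. First try to decompose the edge set into cycles and orient each cycle. The $K_6$ blocks make this easy: $K_6$ is $5$-regular, so it is not Eulerian, but its edges split into $5$ edge-disjoint triangles? That fails, since $15$ edges would give $5$ triangles, yet a Steiner triple system requires order $\equiv1,3 \pmod 6$.

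So use instead the general fact that every edge of a \emph{bridgeless} graph lies on a cycle. Each edge $e$ of $G$ lies in a triangle inside its $K_6$ block. Orienting that triangle gives a unit circulation $c_e$ that is nonzero on $e$.

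Now take $J=\sum_e \lambda_e c_e$ with integer weights $\lambda_e$ chosen so that no edge cancels. Concretely, number the edges $e_1,\ldots,e_{15m}$ and set $\lambda_{e_t}=2^{t}$? That would make $C$ exponential in size, though still polynomial in bits, which breaks \emph{strong} hardness.

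A better route is to work blockwise, since each edge lies in exactly one block and so the blocks can be handled independently. Within one $K_6$, take a fixed skew-symmetric integer matrix $J^{(6)}$ with zero row sums and all off-diagonal entries nonzero. One such matrix is the circulant $J_{ij}=s(j-i \bmod 6)$ with $s(1)=1$, $s(2)=1$, $s(3)=0$, $s(4)=-1$, $s(5)=-1$. That fails at distance $3$, where the entry is $0$.

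The fix is to add to it the sum of oriented triangles covering the three diametric edges. For example, add the triangles $(0,3,1)$, $(1,4,2)$, $(2,5,3)$ with suitable orientations. Then check by finite computation that all $15$ entries are nonzero; adjust the coefficients by small constants if needed. This is a constant-size search, so a valid $J^{(6)}$ with entries bounded by a small absolute constant exists.

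Summing these block matrices gives $J$ with entries bounded by that constant. Then $C$ is also a constant, say $C=10$, and all weights are bounded integers.

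\textbf{NP membership and connectedness.} Membership follows from Lemma~\ref{lem:rank-one-certificate} applied to \eqref{eq:dicut-sdp}, written as $\max\langle B,X\rangle$ over the $(|V|+1)$-dimensional elliptope. A constant term can be absorbed using $X_{00}=1$. The certificate is a vector $(1,x)$; its negation is also optimal, so fixing $z_0=1$ is harmless.

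Finally, check that the vertex identifications used in Remark~\ref{rem:connected} are compatible with the blockwise choice of $J$. They are: identification merges vertices but keeps the edge sets of the blocks disjoint, so row sums of $J$ still add to zero.
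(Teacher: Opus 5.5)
Your proposal is correct. It differs from the paper only in how the circulation $J$ is built; the NP-membership part and the transfer via Lemma~\ref{lem:circulation-invariance} are the same.

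\textbf{The paper's route.} It argues globally. Every edge lies in a $K_6$, so $G$ is bridgeless, and Robbins' theorem (made constructive by a DFS orientation) gives a strongly connected orientation. For each of the $r=|E(G)|$ edges, it adds a directed cycle through that edge. All these cycles follow the same orientation, so nothing cancels and $1\le f(e)\le r$. This gives $C=r+1$ and weights up to $2r+1$, which are polynomially bounded and suffice for strong hardness.

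\textbf{Your route.} You use the partition of $E(G)$ into edge-disjoint $K_6$ blocks. You correctly note that this partition survives the identification in Remark~\ref{rem:connected}, since only one vertex per component is merged. A single fixed nowhere-zero circulation on $K_6$ can then be copied into every block.

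\textbf{Comparison.} Your approach gives a stronger conclusion: hardness with arc weights bounded by an absolute constant. The price is that it relies on the specific block structure, whereas the paper needs only bridgelessness.

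\textbf{One step to tighten.} Replace your ``finite search'' by an explicit matrix, because the triangle orientations matter. For example, orienting the triangle on $\{0,1,3\}$ as $0\to3\to1\to0$ would zero out $J_{01}$ and $J_{13}$. Add to your circulant the triangles with orientations $0\to1\to3\to0$, $1\to2\to4\to1$, and $2\to3\to5\to2$. Then all fifteen entries are nonzero with $|J_{ij}|\in\{1,2\}$, so $C=3$ works and every arc weight lies in $\{1,2,4,5\}$. Alternatively, apply the paper's Robbins argument to a single $K_6$. This gives a fixed block circulation with entries between $1$ and $15$ in absolute value, and $C=16$.
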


\begin{proof}
We first prove membership in NP for the basic relaxation~\eqref{eq:dicut-sdp}. Write its objective as $c+\langle B,X\rangle$, where
\[
c=\frac14\sum_{i\ne j}W_{ij},\qquad
B_{ij}=-\frac{W_{ij}+W_{ji}}8\quad(i,j\in V,\ i\ne j),
\]
\[
B_{i0}=B_{0i}=\frac18\sum_{j\ne i}(W_{ij}-W_{ji}),\qquad B_{uu}=0.
\]
These rational coefficients have polynomial encoding length. A rank-one feasible matrix has the form $zz^\top$ for a sign vector $z$; replacing $z$ by $z_0z$ ensures $z_0=1$ and identifies a directed cut. Therefore exactness is equivalent to the existence of a rank-one optimum, which has a polynomial-time certificate by Lemma~\ref{lem:rank-one-certificate}.

For hardness, take a connected simple unweighted graph $G$ from the $K_6$ construction above, using the vertex identification in Remark~\ref{rem:connected} to ensure connectedness. Every edge still belongs to a $K_6$, so $G$ is bridgeless. Let $A$ be its adjacency matrix and let $r=|E(G)|$.

By Robbins' theorem \cite{Robbins1939}, $G$ admits a strongly connected orientation. Such an orientation can be constructed in polynomial time: orient the tree edges of a depth-first search away from the root and the remaining edges toward their ancestors. For each oriented edge $u\to v$, choose a directed path from $v$ to $u$ without repeated vertices. Together with $u\to v$, this path gives a directed cycle. Superimpose unit circulations on these $r$ cycles. The resulting integer flow $f$ satisfies conservation at every vertex and
\[
1\le f(e)\le r \qquad\text{for every oriented edge }e.
\]
Define $J_{uv}=f(u\to v)$ and $J_{vu}=-f(u\to v)$ for each oriented edge, and set all remaining entries to zero. Then $J^\top=-J$, $J\mathbf1=0$, and $J$ is nonzero on every edge of $G$.

Set $C=r+1$ and $W=CA+J$. On every edge of $G$, the two arc weights are $C+f(e)$ and $C-f(e)$. They are distinct positive integers bounded by $2r+1$. The support contains both orientations of every edge of the connected graph $G$, so $D$ is strongly connected. Its weighted indegree and outdegree agree at every vertex by construction.

Lemma~\ref{lem:circulation-invariance} gives
\begin{equation}
\label{eq:dicut-gap-transfer}
\operatorname{SDP}_{\mathrm{di}}(D) -\operatorname{OPT}_{\mathrm{di}}(D) =C\bigl(\operatorname{SDP}(G) -\operatorname{MC}(G)\bigr).
\end{equation}
Thus $D$ is exact if and only if $G$ is exact. The construction takes polynomial time and produces polynomially bounded integer weights. The unweighted Max-Cut exactness hardness result in Theorem~\ref{thm:main} therefore implies strong NP-hardness. Together with membership in NP, this proves the theorem.
\end{proof}

The hardness argument also applies if \eqref{eq:dicut-sdp} is strengthened by the local consistency inequalities on the anchor and each pair of variables, as in the Boolean two-variable CSP relaxation \cite{BrakensiekEtAl2023}:
\[
1+sX_{i0}+tX_{j0}+stX_{ij}\ge0 \qquad(s,t\in\{-1,1\}).
\]
Indeed, the extension $\begin{pmatrix}1&0\\0&Y\end{pmatrix}$ used in the lemma satisfies these inequalities because they reduce to $1\pm Y_{ij}\ge0$. Hence the value identity and strong NP-hardness remain valid for this strengthened relaxation. The NP-membership argument above concerns the basic relaxation~\eqref{eq:dicut-sdp}.

\subsection{Max-Bisection}
\label{subsec:bisection-exactness}
A bisection partitions the vertices of a graph into two sets of equal size. For a graph $H$ with an even number of vertices, let $\operatorname{OPT}_{\mathrm{bis}}(H)$ denote the maximum weight of edges crossing such a partition. In sign notation, the equal-size condition is $\mathbf1^\top z=0$. We consider the SDP relaxation
\begin{equation}
\label{eq:bisection-sdp}
\operatorname{SDP}_{\mathrm{bis}}(H) =\max\left\{ \frac14\sum_{u\ne v}W^H_{uv}(1-X_{uv}): X\succeq0,\ X_{uu}=1,\ \mathbf1^\top X\mathbf1=0 \right\}.
\end{equation}
For a Gram representation $X_{uv}=\langle v_u,v_v\rangle$, the last constraint is equivalent to $\sum_u v_u=0$.

The familiar reduction from Max-Cut to Max-Bisection takes two disjoint copies of the input graph \cite{RaghavendraTan2012}. We connect these copies by a matching between corresponding vertices. This graph construction also appears in other reduction settings~\cite{FellowsEtAl2012}. Here we assign opposite vectors $v_i$ and $-v_i$ to corresponding vertices. These antipodal assignments satisfy the balance constraint and give an exact relation between the two SDP values.

\begin{lemma}
\label{lem:antipodal-extension}
Let $G$ be a graph on $n$ vertices with nonnegative edge weights. Construct $H$ from two copies of $G$, retaining their edge weights, and add an edge of weight $1$ between each vertex $i$ and its copy $\bar i$. Then
\begin{align}
\operatorname{SDP}_{\mathrm{bis}}(H) &=2\operatorname{SDP}(G)+n, \label{eq:bisection-sdp-transfer}\\
\operatorname{OPT}_{\mathrm{bis}}(H) &=2\operatorname{MC}(G)+n. \label{eq:bisection-integer-transfer}
\end{align}
\end{lemma}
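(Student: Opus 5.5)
For each of the two identities I will prove an upper bound and a matching lower bound. In both cases I split the objective of $H$ into three parts: the copy of $G$ on the vertices $i$, the copy on the vertices $\bar i$, and the $n$ matching edges $\{i,\bar i\}$.

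\emph{Lower bounds.} Take an optimal Max-Cut SDP vector assignment $(v_i)$ for $G$. Assign $v_i$ to $i$ and $-v_i$ to $\bar i$. All vectors are unit vectors. Their sum is $\sum_i(v_i-v_i)=0$, so the constraint $\mathbf1^\top X\mathbf1=0$ holds. Negating every vector of a copy leaves its internal inner products unchanged, so each copy contributes $\operatorname{SDP}(G)$. Each matching edge contributes $\frac14\cdot2\,(1-\langle v_i,-v_i\rangle)=1$, where the factor $2$ comes from the ordered pairs in the sum. This gives $\operatorname{SDP}_{\mathrm{bis}}(H)\ge2\operatorname{SDP}(G)+n$. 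The discrete version uses an optimal cut $x$ and places $\bar i$ on the side opposite to $i$. Then exactly one of $i,\bar i$ lies on each side, so the partition is a bisection, and it has value $2\operatorname{MC}(G)+n$.

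\emph{Upper bounds.} Let $X$ be feasible for \eqref{eq:bisection-sdp}. Its principal submatrix on each copy is feasible for the Max-Cut SDP of $G$, so each copy contributes at most $\operatorname{SDP}(G)$. Each matching edge contributes $\frac12(1-X_{i\bar i})\le1$, because $X_{i\bar i}\ge-1$. Hence the total is at most $2\operatorname{SDP}(G)+n$. In the discrete case I restrict a bisection to each copy. Each restriction is a cut of $G$ with value at most $\operatorname{MC}(G)$, and each matching edge contributes at most $1$. This gives $\operatorname{OPT}_{\mathrm{bis}}(H)\le2\operatorname{MC}(G)+n$.

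Neither bound needs anything beyond feasibility, so I do not expect a serious obstacle. The step that needs care is the normalization: the objective sums over ordered pairs $u\ne v$, and I must check that the weight-$1$ matching edge contributes exactly $1$ when the two endpoints are antipodal or on opposite sides. I should also note that $H$ has $2n$ vertices, so bisections of $H$ exist.
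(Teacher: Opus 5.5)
Your proposal is correct and follows the paper's proof essentially step for step. You bound each copy by restricting to it and each matching edge by $1$, and you get the lower bounds from the antipodal assignment ($v_i$ to $i$, $-v_i$ to $\bar i$, i.e.\ the Gram matrix $\begin{pmatrix}Y&-Y\\-Y&Y\end{pmatrix}$) and from the bisection $(x,-x)$. Your explicit check that the ordered-pair sum gives each matching edge the contribution $\frac12(1-X_{i\bar i})\le 1$ is a detail the paper leaves implicit.
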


\begin{proof}
Restricting any feasible vector assignment on $H$ to either copy of $G$ gives a feasible Max-Cut vector assignment. Each copy therefore contributes at most $\operatorname{SDP}(G)$, while the $n$ matching edges contribute at most $n$. This proves the upper bound in \eqref{eq:bisection-sdp-transfer}.

Let $v_1,\ldots,v_n$ attain the Max-Cut SDP optimum on $G$. Assign $v_i$ to $i$ and $-v_i$ to $\bar i$. Both copies attain the original SDP value, every matching edge contributes $1$, and
\[
\sum_{i=1}^n(v_i-v_i)=0.
\]
The assignment is consequently feasible for \eqref{eq:bisection-sdp} and attains the upper bound. Equivalently, if $Y$ is the original optimal Gram matrix, the extension is
\[
X=\begin{pmatrix}Y&-Y\\-Y&Y\end{pmatrix}, \qquad X\succeq0,\quad X\mathbf1=0.
\]

In the discrete problem, each copy contributes at most $\operatorname{MC}(G)$ and the matching contributes at most $n$. If $x$ is a maximum cut of $G$, the assignment $(x,-x)$ is a bisection of $H$ and attains all three bounds. This proves \eqref{eq:bisection-integer-transfer}.
\end{proof}

\begin{theorem}
\label{thm:bisection-exactness-hardness}
Recognizing exactness of Max-Bisection SDP relaxation \eqref{eq:bisection-sdp} is NP-hard even for connected simple unweighted graphs. Consequently, it is strongly NP-hard for graphs with nonnegative integer edge weights.
\end{theorem}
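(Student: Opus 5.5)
The plan is to reduce from Max-Cut exactness on connected simple unweighted graphs, which is NP-hard by Theorem~\ref{thm:main}, using the doubling construction of Lemma~\ref{lem:antipodal-extension}. Given a connected simple unweighted graph $G$ on $n$ vertices, for instance one produced by the $K_6$ construction with the identification of Remark~\ref{rem:connected}, we form $H$ from two disjoint copies of $G$ together with the perfect matching $i\bar i$. All weights are $1$ and no parallel edges arise, because the matching edges join distinct copies. Hence $H$ is a simple unweighted graph on $2n$ vertices, and $2n$ is even, so the bisection problem is well defined. It is also connected: each copy is connected, and at least one matching edge joins them (indeed $n\ge1$). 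The construction takes polynomial time.

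By Lemma~\ref{lem:antipodal-extension}, subtracting \eqref{eq:bisection-integer-transfer} from \eqref{eq:bisection-sdp-transfer} gives
\[
\operatorname{SDP}_{\mathrm{bis}}(H)-\operatorname{OPT}_{\mathrm{bis}}(H)=2\bigl(\operatorname{SDP}(G)-\operatorname{MC}(G)\bigr).
\]
Consequently $H$ is exact for \eqref{eq:bisection-sdp} if and only if $G$ is exact for the standard Max-Cut SDP. For the $K_6$ instances this holds if and only if the source formula $\Phi$ is satisfiable, and by \eqref{eq:exact-gap} the gap equals twice the minimum number of unsatisfied clauses. This gives NP-hardness for connected simple unweighted graphs.

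Strong NP-hardness for nonnegative integer weights then follows immediately, since unweighted graphs are the special case with all weights in $\{0,1\}$. The only point requiring care is the lemma's implicit claim that \eqref{eq:bisection-sdp} is attained and has a well-defined value. The feasible region is compact and nonempty, as the antipodal extension shows, so the maximum exists. No step here seems genuinely obstructive; the main work, namely the two value identities, is already done in Lemma~\ref{lem:antipodal-extension}. We claim only hardness, not NP membership, because the balance constraint prevents a direct application of Lemma~\ref{lem:rank-one-certificate}.
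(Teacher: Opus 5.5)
Your proposal is correct and follows essentially the same route as the paper: apply Lemma~\ref{lem:antipodal-extension} to a connected simple unweighted $K_6$ instance, subtract the two value identities to obtain the factor-two gap transfer, and conclude that $H$ is exact if and only if $G$ is. The checks you add on connectivity, simplicity, and attainment of the maximum in \eqref{eq:bisection-sdp} are points the paper leaves implicit, and they are harmless.
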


\begin{proof}
Apply Lemma~\ref{lem:antipodal-extension} to a connected simple unweighted instance $G$ from the Max-Cut exactness reduction, using Remark~\ref{rem:connected} to ensure connectedness. The resulting graph $H$ is connected, simple, and unweighted, and the construction takes polynomial time. Subtracting the two identities in the lemma yields
\begin{equation}
\label{eq:bisection-gap-transfer}
\operatorname{SDP}_{\mathrm{bis}}(H) -\operatorname{OPT}_{\mathrm{bis}}(H) =2\bigl(\operatorname{SDP}(G) -\operatorname{MC}(G)\bigr).
\end{equation}
Thus $H$ is exact if and only if $G$ is exact, proving the result.
\end{proof}

The identity~\eqref{eq:bisection-sdp-transfer} also describes all optimal SDP solutions on $H$. The two copies and the matching each have an upper bound on their contribution, and the SDP optimum equals the sum of these bounds. Every optimal solution must therefore attain all of them. In particular, $X_{i\bar i}=-1$ for every matching edge. Since the corresponding vectors have unit norm,
they must be opposites. Their contributions to the global vector sum cancel pairwise, and the restriction to either copy is an optimal Max-Cut SDP solution on $G$. Conversely, every optimal solution on $G$ extends in this way to an optimal solution on $H$.

\section{Summary and Conclusion}
\label{sec:conclusion}
In this manuscript, we resolve the complexity of recognizing exactness of the standard Max-Cut SDP relaxation for simple unweighted graphs. The problem is NP-complete even when the graph is connected, and hence strongly NP-complete for nonnegative integer edge weights. This answers the unweighted recognition question posed by Delorme and Poljak and shows that the difficulty of recognition persists without large numerical weights.

The reduction also identifies precisely where this difficulty arises. For the constructed instances, the SDP optimum is known explicitly, and an exact rational optimal primal--dual pair can be produced in polynomial time, independently of whether the source formula is satisfiable. Nevertheless, deciding whether a cut attains this certified optimum remains NP-complete. Thus, unless $\mathrm{P}=\mathrm{NP}$, there is no polynomial-time algorithm for recognizing exactness even when an exact optimal primal--dual pair is supplied. In geometric terms, the construction supplies a rational point in an explicitly described optimal face, while the existence of a rank-one point in that face encodes the underlying satisfiability problem.

The clause construction gives more than an equivalence between satisfiability and exactness: its additive integrality gap equals the minimum number of unsatisfied clauses in the linear source instance. Each violated clause contributes exactly one unit to the gap after its auxiliary vertices are optimized. This identity explains how the combinatorial obstruction to satisfiability appears in the relaxation gap. It does not, however, establish a constant relative gap, since the transformation to linear instances is used only to preserve satisfiability. The independent bounded-weight construction provides a second proof of strong NP-completeness without Kun's theorem, showing directly how sums of clause-specific squares enforce the required Boolean constraints.

The extensions show that this recognition difficulty also occurs for other semidefinite relaxations of partitioning problems. For the Frieze--Jerrum Max-$k$-Cut relaxation, strong NP-hardness holds for every fixed $k\ge3$, even on connected graphs with nonnegative integer edge weights. Here too, the construction provides an explicit SDP optimum that a discrete partition attains if and only
if the source formula is satisfiable. For the basic Max-DiCut relaxation, recognition is strongly NP-complete even with balanced weighted degrees and positive unequal opposite arc weights. For the Max-Bisection relaxation considered here, recognition is NP-hard even on connected simple unweighted graphs. The latter two reductions preserve the Max-Cut additive integrality gap up to an explicit factor.

These hardness results complement structural criteria for exactness. For the standard Max-Cut SDP, a given cut determines a rational slack matrix whose positive semidefiniteness certifies that the cut is SDP-optimal. Consequently, exactness can be recognized in polynomial time on any graph class where a maximum cut can be found in polynomial time. The general recognition problem is difficult despite this efficient verification of a proposed exact cut.

An important direction is to determine whether comparable recognition hardness holds for stronger semidefinite relaxations, including fixed levels of sum-of-squares hierarchies. The present hardness results do not automatically extend to these formulations: additional constraints may exclude the vector assignments that certify the SDP values in our reductions. Extending our approach would require a construction whose continuous certificate satisfies the stronger relaxation on both satisfiable and unsatisfiable source
instances, while discrete attainment continues to characterize satisfiability.
\bibliographystyle{plainurl}
\bibliography{references}
\end{document}